\newif\ifTwoColumn		
\newif\ifTechReport
\theoremstyle{plain}
\newtheorem{theorem}{Theorem}
\newtheorem{lemma}{Lemma}
\newtheorem{proposition}{Proposition}
\newtheorem*{claim*}{Claim}
\theoremstyle{definition}
\newtheorem{definition}{Definition}
\newtheorem{standing}{Standing Assumption}
\theoremstyle{remark}
\newcommand{\R}{\mathbb{R}}
\newcommand{\bbP}{\mathbb{P}}
\begin{document}
\begin{frontmatter}

\ifTwoColumn
	\title{On the Sample Size of Random Convex Programs with \\ Structured Dependence on the Uncertainty (Extended Version)\footnote{This manuscript is an extended author's version of a paper that was accepted for publication in the journal Automatica, which is subject to Elsevier copyright. Changes resulting from the publication process, such as peer review, editing, corrections, structural formatting, and other quality control mechanisms may not be reflected in this document. A definitive
version is published in Automatica (volume 60, pages 182 - 188, 2015); doi: \url{dx.doi.org/10.1016/j.automatica.2015.07.013}. }}
\else
	\title{On the Sample Size of Random Convex Programs with \\ Structured Dependence on the Uncertainty (Full Version) \footnote{This manuscript is an extended author's version of a paper that was accepted for publication in the journal Automatica, which is subject to Elsevier copyright. Changes resulting from the publication process, such as peer review, editing, corrections, structural formatting, and other quality control mechanisms may not be reflected in this document. A definitive
version is published in Automatica. doi: \url{dx.doi.org/10.1016/j.automatica.2015.07.013}. }}
\fi

\author[ethz]{Xiaojing Zhang}
\author[ethz]{Sergio Grammatico}
\author[berkeley]{Georg Schildbach}
\author[ox]{Paul Goulart}
\author[ethz]{John Lygeros}

\address[ethz]{Automatic Control Laboratory, ETH Zurich, Switzerland}
\address[berkeley]{Model Predictive Control Lab, UC Berkeley, United States of America}
\address[ox]{Department of Engineering Science, Oxford University, United Kingdom}

\tnotetext[fninfo]{E-mail addresses: \texttt{xiaozhan@control.ee.ethz.ch} (X. Zhang), \texttt{grammatico@control.ee.ethz.ch} (S. Grammatico), \texttt{schildbach@berkeley.edu} (G. Schildbach), \texttt{paul.goulart@eng.ox.ac.uk} (P. Goulart), \texttt{jlygeros@control.ee.ethz.ch} (J. Lygeros).}

\begin{abstract}
The ``scenario approach" provides an intuitive method to address chance constrained problems arising in control design for uncertain systems. It addresses these problems by replacing the chance constraint with a finite number of sampled constraints (scenarios). The sample size critically depends on Helly's dimension, a quantity always upper bounded by the number of decision variables. However, this standard bound can lead to computationally expensive programs whose solutions are conservative in terms of cost and violation probability. We derive improved bounds of Helly's dimension for problems where the chance constraint has certain structural properties. The improved bounds lower the number of scenarios required for these problems, leading both to improved objective value and reduced computational complexity. 
Our results are generally applicable to Randomized Model Predictive Control of chance constrained linear systems with additive uncertainty and affine disturbance feedback. The efficacy of the proposed bound is demonstrated on an inventory management example.
\end{abstract}

\end{frontmatter}


\section{Introduction}\label{sec:introduction}
Many problems in systems analysis and control synthesis can be formulated as optimization problems, including Lyapunov stability, 
robust control, and Model Predictive Control (MPC) problems
\ifTwoColumn
   	\cite{garcia:prett:morari:89, maciejowski, boyd:elghaoui:feron:balakrishnan, tempo:calafiore:dabbene, MayneMPC2014}.
\else
	\cite{garcia:prett:morari:89, maciejowski, boyd:elghaoui:feron:balakrishnan, tempo:calafiore:dabbene, MayneMPC2014}.
\fi
In reality, most systems are affected by uncertainty and/or disturbances, in which case a control decision should be made that accounts for these uncertainties. In robust optimization, one seeks a solution satisfying all admissible uncertainty realizations (worst-case approach). Unfortunately, robust programs are in general difficult to solve \cite{bental:nemirovski:98}, and computational tractability is often obtained at the cost of introducing conservatism.

Stochastic optimization offers an alternative approach where constraints are interpreted in a probabilistic sense as chance constraints, allowing for constraint violations with limited probability \cite{prekopa}.
Except for special cases, chance constrained problems are also intractable, since they generally are non-convex and require the computation of high-dimensional probability integrals. 
Randomized methods are tools for approximating the solutions to such problems, without being limited to specific probability distributions. 
By replacing the chance constraint with a finite number of randomly sampled constraints, the fundamental question in randomized algorithms is how large to choose the sample size to guarantee constraint satisfaction with high confidence. 
One approach is based on the Vapnik-Chervonenkis (VC) theory of statistical learning \cite{ anthony:biggs}, which has been studied widely for control applications \cite{tempo:calafiore:dabbene, vidyasagar:01}. In general, however, statistical learning theory comes with high sample complexity and consequent conservatism \cite[Section 1.2]{calafiore:campi:05}. 
Recently, a new randomized method known as the ``scenario approach'' has emerged, which is applicable whenever the sampled program is convex \cite{campi:garatti:08, calafiore:10}, and which has been successfully exploited for control design, both theoretical and applied, see e.g.\ \cite{calafiore:campi:06, zhang:schildbach:sturzenegger:morari:13, parisioIFAC2014implementation, ZhangECC2015}. Compared to methods based on the theory of statistical learning, the sample size required by the scenario approach is typically much lower \cite[Section 1.2]{calafiore:campi:05}.

The sample size bounds provided by the scenario approach are based on the notion of \emph{Helly's dimension} \cite[Definition 3.3]{calafiore:10}, which is always upper bounded by the number of decision variables \cite[Lemma 2.2]{calafiore:10}. Since the sample size bound grows linearly in Helly's dimension \cite[Corollary 5.1]{calafiore:10}, finding better bounds not only reduces conservatism of the solution, but also allows problems to be solved faster. Unfortunately, computing  Helly's dimension for a given problem is often challenging. 
To the best of the authors' knowledge, the only attempts to obtain improved bounds are the works in \cite{schildbach:fagiano:morari:13, zhang:grammatico:schildbach:goulart:lygeros:14ecc}. It is shown in \cite{schildbach:fagiano:morari:13} that Helly's dimension can be upper bounded by the so-called \textit{support rank} (s-rank), obtained by exploiting structural properties of the constraints in the \emph{decision space}. Under certain technical assumptions, the authors of \cite{ zhang:grammatico:schildbach:goulart:lygeros:14ecc} upper bound Helly's dimension by the number of active constraints, which can applied for cases where the constraint functions are affine in the uncertainty variables.

In this paper, we propose new methodologies for bounding Helly's dimension that exploit additional structure in the constraint functions. We first establish bounds for generic problems where the constraint functions are separable in the decision and uncertainty variables. We then exploit these structures for cases where the constraint functions depend affinely and quadratically on the uncertainty variables. 
The derived sample size depends on the dimension of the \emph{uncertainty space}, hence complementing \cite{schildbach:fagiano:morari:13} and generalizing those in \cite{zhang:grammatico:schildbach:goulart:lygeros:14ecc}. Furthermore, we also show explicitly that for the considered problems the scenario approach together with our bounds always provides lower sample sizes than the corresponding ones based on the VC theory of statistical learning.

\ifTwoColumn
	The paper is organized as follows: Section \ref{sec:background} establishes the problem setup and presents the technical background. In Section \ref{sec:structuredUncertainty} we present the case when the constraint function exhibits a \emph{structured} dependence on the uncertainty. We compare the new sample sizes obtained with existing methods in Section \ref{sec:sampleSize}. In Section \ref{sec:Examples}, we apply our bounds to Randomized MPC problems, and present numerical results for an inventory management problem. Finally, Section \ref{sec:conclusion} concludes the paper. 
\else
	The paper is organized as follows: Section \ref{sec:background} establishes the problem setup and presents the technical background. In Section \ref{sec:structuredUncertainty} we present the case when the constraint function exhibits a \emph{structured} dependence on the uncertainty. 
	We also show how this result can be applied to the special cases in which the constraint function depends affinely and qudratically on the uncertainty, respectively. 
	We study the special case in which the constraint functions are simultaneously upper and lower bounded in Section \ref{sec:BoxConstraints}. We compare the new sample sizes obtained with existing methods in Section \ref{sec:sampleSize}. In Section \ref{sec:Examples}, we apply our bounds to Randomized MPC problems, and present numerical results for an inventory management problem. Finally, Section \ref{sec:conclusion} concludes the paper. 
\fi

\section{Problem Description and Technical Background}\label{sec:background}
Let $\delta\in\Delta\subseteq\R^d$ be a random variable defined on a probability space $(\Delta,\mathcal{F},\bbP)$, where $\Delta$ is the sample space, $\mathcal{F}$ a $\sigma$-algebra on $\Delta$, and $\mathbb{P}$ the probability measure defined on $\mathcal{F}$. 
We consider chance constrained problems (CCPs) of the form
\begin{equation}\label{eq:CCP_generic}
\textsc{CCP}(\epsilon): \ \left\{
\begin{array}{l}
\displaystyle \min_{x \in \mathcal{X}}\quad c^\top x \\
\text{s.t. }\quad  \bbP \left[ g(x,\delta) \leq 0  \right] \geq 1 - \epsilon,
\end{array}
\right.
\end{equation}
where $\mathcal{X}\subset\R^n$ is a compact convex set, $x\in\R^{n}$ the decision variable, $g: \R^n \times \Delta \to \R$ the constraint function, $\epsilon\in(0,1)$ the acceptable violation probability, and $c\in\R^n$ the cost vector.  We consider the scenario program (SP) associated with $\textsc{CCP}(\epsilon)$, where the chance constraint in \eqref{eq:CCP_generic} is replaced by $N$ sampled constraints, corresponding to independent identically distributed (i.i.d.) realizations $\delta^{(1)},\ldots,\delta^{(N)} \in \Delta$ of the uncertainty vector $\delta$ \cite{calafiore:campi:06, calafiore:campi:05}:
\begin{equation} \label{eq:SP_generic}
\textsc{SP}[\omega]: \ \left\{
\begin{array}{l}
\displaystyle \min_{ x \in \mathcal{X} }\quad \ c^\top x \\
\text{s.t. } \quad \ g(x,\delta^{(j)}) \leq 0 \quad \forall j\in\{1,\ldots,N\}.
\end{array}
\right.
\end{equation}
We refer to $\omega:=\{\delta^{(1)},\ldots,\delta^{(N)}\}\in \Delta^N$ as a multi-sample. Throughout this paper, we make the following assumption.
\begin{standing}[Regularity]\label{sa:convexity_uniqueGlobalMinimizer}
For almost all $\delta\in\Delta$, the function $x \mapsto g(x,\delta)$ is \emph{convex} and \emph{lower semi-continuous}. For any integer $N$, $\textsc{SP}[\omega]$ in \eqref{eq:SP_generic} is almost surely \emph{feasible}; its optimizer \emph{exists} and is \emph{unique} for almost all realizations of $\omega\in\Delta^N$. For all $x\in\mathcal{X}$, the mapping $\delta \mapsto g(x,\delta)$ is measurable.
\end{standing}

Standing Assumption \ref{sa:convexity_uniqueGlobalMinimizer} is standard in the scenario approach  \cite[Assumption 1]{campi:garatti:08}, \cite[Assumptions 1, 2]{calafiore:10}, \cite[Assumptions 1, 2]{calafiore:campi:06}, and \cite[Appendix B]{grammatico:zhang:margellos:goulart:lygeros:15}. The uniqueness requirement can be relaxed by adopting a suitable (strictly convex or lexicographic) tie-break rule \cite[Section 4.1]{calafiore:campi:05}. We refer to \cite[Appendix B]{grammatico:zhang:margellos:goulart:lygeros:15} and \cite{mohajerin:sutter:lygeros:14} for a technical discussion on measurability issues.

Let us denote the (unique) minimizers of $\textsc{SP}[\omega]$ and $\textsc{SP}[\omega \setminus \{\delta^{(k)}\}]$, for $k \in \{ 1, ..., N\}$, by $x^\star$ and $x^\star_k$, respectively. Our forthcoming results are based on the following two key definitions.
\begin{definition}[{Support Constraint \cite[Definition 4]{calafiore:campi:05}}]
The sample $\delta^{(k)}$ is called a \emph{support sample} if $c^\top x_k^\star < c^\top x^\star$; in this case the corresponding constraint $g(\cdot, \delta^{(k)})$ is called a \textit{support constraint} for $\textsc{SP}[\omega]$.
The \textit{set of support constraints} of $\textsc{SP}[\omega]$ is denoted by $\text{sc}(\textsc{SP}[\omega])$.
\end{definition}
\ifTwoColumn
   	{}
\else
	In other words, a sample is a support sample if removing it from \eqref{eq:SP_generic} improves the optimal cost. 
\fi
We denote by $|\text{sc}(\textsc{SP}[\omega])|$ the cardinality of the set of support constraints. 
\begin{definition}[{Helly's dimension \cite[Definition 3.1]{calafiore:10}}]
	\emph{Helly's dimension} of $\textsc{SP}[\omega]$ in \eqref{eq:SP_generic} is the smallest integer $\zeta$ such that 
	\begin{equation*}
		\displaystyle \operatorname{ess}\sup_{\omega\in\Delta^N}\ |\text{sc}(\textsc{SP}[\omega])|\leq \zeta
	\end{equation*}
holds for any finite $N\geq1$.
\end{definition}
\ifTwoColumn
   	Helly's dimension thus is the maximum number of support constraints $|\text{sc}(\textsc{SP}[\omega])|$ for any possible realization of a multi-sample. 
\else
	Helly's dimension thus is the maximum number of support constraints $|\text{sc}(\textsc{SP}[\omega])|$ for any possible realization of a multi-sample. 
\fi
Intuitively, the \textsc{SP} in \eqref{eq:SP_generic} can be used to approximate the \textsc{CCP} in \eqref{eq:CCP_generic}.
Indeed, the authors of \cite{campi:garatti:08, calafiore:10} show that 
if the sample size $N$ satisfies
\begin{equation}\label{eq:N_choice_zeta}
	\sum_{j=0}^{\zeta-1} {N \choose j} \epsilon^j (1-\epsilon)^{N-j} \leq \beta 
\end{equation}
for some $\beta\in(0,1)$, then, with confidence at least $1-\beta$, the optimal solution of $\textsc{SP}[\omega]$ is feasible for the original $\textsc{CCP}(\epsilon)$ \cite[Theorem 3.3]{calafiore:10}. 
\ifTwoColumn
   	{}
\else
	For all given $\epsilon\in (0,1)$, the left-hand side of \eqref{eq:N_choice_zeta} is a decreasing function of $N$, which tends to zero as $N$ tends to infinity.
\fi
It was shown in \cite[Lemma 2.2]{calafiore:10} that Helly's dimension $\zeta$ is always upper bounded by $n$. This \emph{standard bound} ($\zeta \leq n$), however, is only tight for fully-supported problems \cite[Theorem 1]{campi:garatti:08}, but remains conservative otherwise. 

The overall goal of this paper is to find tighter upper bounds on Helly's dimension $\zeta$ for non-fully-supported problems, which would allow for smaller $N$ than the one given by the standard bound. Following \cite{calafiore:10} one can show that \eqref{eq:N_choice_zeta} is satisfied if $N$ is chosen such that
\begin{equation}\label{eq:N_expl_cal}
	 N \geq \frac{2}{\epsilon} \left( \zeta-1 + \ln\left(\frac{1}{\beta}\right)  \right).
\end{equation}

Since $\epsilon$ is typically chosen small in many practical applications (e.g.\ $10^{-1} \sim 10^{-4}$)  and $N$ roughly scales as $\mathcal{O}(\zeta/\epsilon)$, finding a good bound on $\zeta$ is key for reducing the required sample size. A small sample size is attractive mainly for two reasons: less conservative solutions in terms of cost, and reduced computational time for solving the scenario program in \eqref{eq:SP_generic}. Also, a small $N$ is beneficial for cases in which the extraction of samples is itself costly. 

\subsection{Bounding Helly's dimension}
Unfortunately, explicitly computing $\zeta$ is in general very difficult. Tighter bounds on Helly's dimension were introduced in \cite{schildbach:fagiano:morari:13}, based on the so-called \textit{support rank} (s-rank). It is defined as the dimension $n$ of the decision space minus the dimension of the maximal linearly unconstrained subspace \cite[Definition 3.6]{schildbach:fagiano:morari:13}, and is therefore never worse than the standard bound.
\ifTwoColumn
   	As shown in \cite[Example 3.5]{schildbach:fagiano:morari:13}, the s-rank can be explicitly computed if $g(x,\delta)$ is affine or quadratic in the decision variable $x$, and hence in some cases can lead to a significant reduction in the sample size $N$.
\else
	Therefore, whenever the s-rank can be computed, it provides an improved upper bound on Helly's dimension than $n$ and can be substituted into \eqref{eq:N_expl_cal} to obtain a lower bound on the sample size. As shown in \cite[Example 3.5]{schildbach:fagiano:morari:13}, the s-rank can be explicitly computed if $g(x,\delta)$ is affine or quadratic in the decision variable $x$, and hence in some cases can lead to a significant reduction in the sample size $N$.
\fi

There are, however, cases where the s-rank yields no improvement upon the standard bound, although the exact Helly's dimension is much lower. Consider, for instance,
\begin{equation}\label{eq:illExample}
\left\{
\begin{array}{c l}
\displaystyle	\min_{(y,h)\in\R^{n-1}\times\R} \quad & h \\
	\text{s.t.}  \displaystyle  & \bbP \left[ \|A y -b\| +\delta  \leq h \right] \geq 1-\epsilon, 
\end{array}
\right.
\end{equation}
where $\|\cdot\|$ is any norm, $\delta\in\R$ is a continuous random variable, $A\in\R^{k\times (n-1)}$ has full column rank and $b\in\R^k$. The s-rank for the above problem is $n$, because $A$ is full column rank. Hence, the s-rank does not improve upon the standard bound on Helly's dimension, resulting in the same bound $\zeta\leq n$. However, by exploiting structural dependence of the constraint function on the uncertainty, we will show in Section \ref{sec:additiveDependence} that the number of support constraints of any SP associated with the CCP in \eqref{eq:illExample} is equal to $1$ almost surely. 

More generally, this paper addresses the problem of upper bounding Helly's dimension in the case $g(x,\delta)$ does not offer enough structure in $x$ for the s-rank to improve upon the standard bound. Unless stated otherwise, the proofs of all following statements can be found in  \ref{app: proofs}.

\section{Structured Dependence on the Uncertainty} \label{sec:structuredUncertainty}

We consider the case where the constraint function of the CCP in \eqref{eq:CCP_generic} is structured and vector-valued of the form $g:\R^n\times \Delta \to \R^r$, where we interpret the inequalities $g(x,\delta)\leq0$ element-wise. All the definitions and results from the previous section carry over by considering the scalarized function $\max_{i\in\{1,\ldots,r\}}\{g_i(x,\delta)\}$, where $g_i(\cdot,\cdot)$ is the $i$th component of $g(\cdot,\cdot)$. Let us now assume that $g(\cdot,\cdot)$ has a \emph{separable} (``sep" for short) structure of the form
\ifTwoColumn
   	\begin{equation}\label{eq:general_separable_constraints}
		g(x,\delta) = g_\text{sep}(x,\delta) := G(x)\, q(\delta) + H(x) + s(\delta),
	\end{equation}
\else
	\begin{equation}\label{eq:general_separable_constraints}
		g(x,\delta) = G(x)\, q(\delta) + H(x),
	\end{equation}
\fi
where $G: \R^n \to \R^{r\times m}$, $q: \Delta \to \R^m$, $H: \R^n \to \R^r$, and $s: \Delta \to \R^r$.
\ifTwoColumn
   {}
\else
	Consider now the CCP
	\begin{equation}\label{eq:CCP_separable_sc}
	\textsc{CCP}_\text{sep}(\epsilon): \ \left\{
	\begin{array}{l}
	\displaystyle \min_{x \in \mathcal{X}} \quad\ c^\top x \\
	\text{s.t. } \quad \ \bbP [ g(x,\delta) \leq 0 ] \geq 1 - \epsilon,
	\end{array}
	\right.
	\end{equation}
	and also, for a given multisample $\omega=\{\delta^{(1)},\ldots,\delta^{(N)}\}$, the corresponding sampled program 
	\begin{equation} \label{eq:SP_separable_sc}
	\textsc{SP}_\text{sep}[\omega]: \ \left\{
	\begin{array}{l}
	\displaystyle \min_{ x \in \mathcal{X} } \quad c^\top x \\
	\text{s.t.} \quad\ g(x,\delta^{(j)}) \leq 0 \  \   \forall j \in \{1,\ldots,N\}.
	\end{array}
	\right.
	\end{equation}
\fi
The following statement contains the fundamental result of this paper.

\ifTwoColumn
\begin{lemma}\label{lem:convex_sep_r_sc}
	Let $g(x,\delta)$ be as in \eqref{eq:general_separable_constraints}. Then  $|\textnormal{sc}(\textsc{SP}[\omega])| \leq r(m+1)$ almost surely.
\end{lemma}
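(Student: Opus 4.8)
The plan is to fix a multi-sample $\omega$ for which $\textsc{SP}[\omega]$ admits a unique minimizer $x^\star$ (this holds almost surely under the Standing Assumption) and to show that, among all the scalar constraints $g_i(\cdot,\delta^{(j)})\le 0$ that are active at $x^\star$, a subset $E$ of cardinality at most $r(m+1)$ already certifies optimality of $x^\star$. Once such a certifying subset is isolated, the argument concludes exactly as in the standard bound $\zeta\le n$: if a sample $\delta^{(k)}$ contributes no constraint to $E$, then the program obtained by deleting $\delta^{(k)}$ still contains $E$, so its optimal value is squeezed between the value of the $E$-only program (which is $c^\top x^\star$) and the value of the full program (also $c^\top x^\star$), leaving $x^\star$ optimal; hence $\delta^{(k)}$ is not a support sample. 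The support samples are therefore contained among the samples contributing to $E$, of which there are at most $|E|\le r(m+1)$.

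To produce $E$, I would use the first-order optimality conditions. Under a constraint qualification, optimality of $x^\star$ means that $-c$ lies in $N_{\mathcal X}(x^\star)$ plus the cone generated by the (sub)gradients $\partial_x g_i(x^\star,\delta^{(j)})$ over the active pairs $(i,j)$; equivalently, there exist multipliers $\lambda_{ij}\ge 0$, supported on active constraints, realizing this inclusion. The crucial structural observation is that, by the separable form \eqref{eq:general_separable_constraints}, the multipliers enter the optimality condition only through the $r(m+1)$ aggregated quantities $\mu_i:=\sum_j\lambda_{ij}\,q(\delta^{(j)})\in\R^m$ and $\nu_i:=\sum_j\lambda_{ij}\in\R_{\ge 0}$, for $i=1,\dots,r$. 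Indeed $\sum_{i,j}\lambda_{ij}\,\partial_x g_i(x^\star,\delta^{(j)})$ collapses to an object determined by $\{(\mu_i,\nu_i)\}_{i=1}^r$ alone, while the term $s(\delta)$ adds only an $x$-independent constant and is immaterial.

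The reduction is then a per-group application of Carathéodory's theorem. For fixed $i$, the identity $\sum_j\lambda_{ij}\,\bigl(q(\delta^{(j)}),1\bigr)=(\mu_i,\nu_i)$ exhibits $(\mu_i,\nu_i)\in\R^{m+1}$ as a nonnegative combination of the vectors $\bigl(q(\delta^{(j)}),1\bigr)$ taken over the active $j$, so Carathéodory's theorem lets me retain at most $m+1$ of them while preserving $(\mu_i,\nu_i)$. Carrying this out in every group yields new multipliers $\tilde\lambda\ge 0$, still supported on active constraints, with at most $m+1$ nonzeros per group and hence at most $r(m+1)$ in total, and with the same aggregates. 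Since the optimality condition depends on the multipliers only through these aggregates, $(x^\star,\tilde\lambda)$ satisfies the optimality conditions of the program retaining only the constraints in $E:=\operatorname{supp}(\tilde\lambda)$, so $x^\star$ is optimal for that reduced program, as required.

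The main obstacle is to run this scheme without presupposing smoothness or a constraint qualification, since the Standing Assumption grants only convexity and lower semicontinuity. For the subdifferential calculus the point to verify is that $\sum_j\lambda_{ij}\,\partial_x g_i(x^\star,\delta^{(j)})$ genuinely depends only on $(\mu_i,\nu_i)$: this is precisely where convexity of $x\mapsto g_i(x,\delta)$ for almost all $\delta$ is used, as it forces any component of $G$ multiplying a sign-indefinite component of $q$ to be affine, so that scaling its subdifferential by the possibly negative weight $(\mu_i)_l$ is unambiguous. Existence of the multipliers should be argued from a qualification holding almost surely, or circumvented by replacing the stationarity argument with a Helly-type argument on the convex sets $\{x:g_i(x,\delta^{(j)})\le 0\}$ in their reduced representation; either way, uniqueness of $x^\star$ together with the measurability hypotheses of the Standing Assumption are what render the bound valid almost surely rather than surely.
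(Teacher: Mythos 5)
Your overall strategy --- certify optimality of $x^\star$ by a KKT multiplier vector, observe that the multipliers enter stationarity only through the aggregates $(\mu_i,\nu_i)\in\R^{m+1}$, and prune the support to $m+1$ active constraints per row by Carath\'eodory --- is a genuinely different route from the paper's, which is purely primal: the paper first rewrites $g_{\mathrm{sep}}$ as $\tilde G(x)\tilde q(\delta)+s(\delta)$ with $\tilde G(x):=[G(x)\ H(x)]$ and $\tilde q(\delta):=[q(\delta);\mathbf 1]$, and then (in the proof of Proposition~\ref{prop:sc_mults}) assumes $m+2$ support constraints exist for a single row, applies Radon's theorem to the $m+2$ points $G(x^\star),G(x_1^\star),\dots,G(x_{m+1}^\star)$ in the lifted uncertainty space, and builds from the Radon partition a convex combination $\bar x$ of the removed-sample optimizers that is feasible and strictly cheaper than $x^\star$. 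That argument needs nothing beyond convexity of $x\mapsto g(\cdot,\delta)$ and feasibility of the $x_k^\star$; in particular it requires no differentiability, no subdifferential calculus, and no constraint qualification. Your dual route would work, and is essentially how support-rank-type bounds are proved, but only under hypotheses the Standing Assumption does not provide, and the two places where you defer the difficulty are exactly where the proof is incomplete.

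Concretely: (i) the existence of the multipliers $\lambda_{ij}$ is not guaranteed. The Standing Assumption gives convexity, lower semicontinuity, feasibility, and a unique minimizer --- but no Slater-type condition, and none ``holds almost surely'' for free; a convex program can have an optimizer admitting no KKT certificate, and your proof has no fallback in that event (the ``Helly-type argument on the reduced sets'' is mentioned but not carried out). (ii) The claim that convexity of $x\mapsto g_i(x,\delta)$ for a.e.\ $\delta$ forces every $G_{il}$ multiplying a sign-indefinite $q_l$ to be affine is false: with $n=m=r=1$, $q(\delta)=\delta$ uniform on $[-1,1]$, $G(x)=\tfrac12x^2$, $H(x)=x^2$, the function $g(x,\delta)=(1+\tfrac{\delta}{2})x^2$ is convex for every $\delta$ yet $G$ is not affine. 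Moreover the aggregation identity you rely on genuinely fails in the nonsmooth sign-indefinite case: for $G(x)=|x|$, $H(x)=2|x|$, $\delta<0$, one has $\partial_x g(0,\delta)=[-(2+\delta),\,2+\delta]$ whereas $\delta\,\partial G(0)+\partial H(0)=[-(2-\delta),\,2-\delta]$, so $\sum_j\lambda_{ij}\partial_x g_i(x^\star,\delta^{(j)})$ is \emph{not} a function of $(\mu_i,\nu_i)$ alone, and after the Carath\'eodory pruning you cannot in general re-realize the needed aggregate subgradient from the retained constraints. The squeeze argument in your first paragraph is fine once a small certifying set $E$ exists; the gap is entirely in producing $E$ under the paper's weak regularity assumptions, which is precisely what the Radon construction accomplishes.
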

\else
	\begin{lemma}\label{lem:convex_sep_r_sc}
	The number of support constraints of $\textsc{SP}_\textnormal{sep}[\omega]$ in \eqref{eq:SP_separable_sc} satisfies $|\textnormal{sc}(\textsc{SP}_\textnormal{sep}[\omega])| \leq r(m+1)$ almost surely.
\end{lemma}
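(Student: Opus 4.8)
The plan is to exploit that the separable form makes each scalar constraint affine in the augmented uncertainty feature $\xi(\delta) := (q(\delta)^\top, 1)^\top \in \R^{m+1}$. Writing $w_i(x) := (G_{i\cdot}(x), H_i(x)) \in \R^{m+1}$ for the $i$-th row block of $(G,H)$, the $i$-th constraint reads $g_i(x,\delta) = \langle w_i(x), \xi(\delta)\rangle$, so the uncertainty enters component $i$ only through the $(m+1)$-dimensional vector $\xi(\delta)$, and the whole constraint map factors through the $r(m+1)$-dimensional ``lifted'' quantity $(w_1(x),\ldots,w_r(x))$. The target $r(m+1)$ is exactly the dimension of this lifted feature space, which is the heuristic for why it should cap the number of support constraints.

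Since the vector constraint is treated by scalarizing via $\max_i g_i$, first I would reduce to a per-component count. Every support sample $\delta^{(k)}$ is active (an inactive constraint could be removed without moving $x^\star$, so it could not be a support sample), hence tight in at least one component; I would assign each support sample to one such component. If the total exceeded $r(m+1)$, a pigeonhole over the $r$ components would yield a single component $i^\star$ to which at least $m+2$ support samples are assigned, all satisfying $\langle w_{i^\star}(x^\star), \xi(\delta^{(k)})\rangle = 0$. The goal then becomes a contradiction: at most $m+1$ support samples can be active in one component.

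The core mechanism is that any $m+2$ vectors $\xi(\delta^{(k_1)}),\ldots,\xi(\delta^{(k_{m+2})})$ in $\R^{m+1}$ are linearly dependent, $\sum_j \gamma_j \xi(\delta^{(k_j)}) = 0$ with $(\gamma_j)$ nontrivial and, reading the last coordinate, $\sum_j \gamma_j = 0$. Pairing this with the constraint form gives the exact identity $\sum_j \gamma_j g_{i^\star}(x,\delta^{(k_j)}) = \langle w_{i^\star}(x), \sum_j \gamma_j \xi(\delta^{(k_j)})\rangle = 0$ for every $x$, i.e. a linear relation among the active constraint functions of component $i^\star$. From here I would feed this relation into the optimality (subgradient/KKT) certificate at $x^\star$: since the relation transfers to the corresponding subgradients, a min-ratio, Carath\'eodory-type adjustment of the multipliers drives a single multiplier to zero while keeping the rest nonnegative, producing a valid optimality certificate that no longer uses one of the $m+2$ constraints. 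That constraint may then be removed without changing $x^\star$, so it is not a support sample --- the desired contradiction.

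The main obstacle is that the feature-space dependence alone is insufficient, because its coefficients $\gamma_j$ have mixed signs: the identity does not render any single constraint redundant for mere \emph{feasibility} (a convex combination would, but we only obtain an affine one). The objective must therefore enter, which is why the argument is routed through the optimality conditions rather than through feasibility, and this is the delicate point --- selecting consistent subgradients for the (possibly non-smooth) $H_i$ so that the feature relation genuinely transfers to a gradient relation, and invoking the ``almost surely'' qualification to discard the measure-zero configurations (coincident or specially aligned samples) in which the active features or the multiplier surgery degenerate. I expect this transfer and multiplier adjustment to be the crux, whereas the dimension count $r(m+1)$ and the pigeonhole reduction are routine.
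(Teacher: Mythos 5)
Your lifting $\xi(\delta)=(q(\delta),1)\in\R^{m+1}$ and the observation that any $m+2$ such vectors are linearly dependent with coefficients summing to zero is precisely the combinatorial fact underlying the paper's proof (it is the affine-dependence step in the proof of Radon's theorem), and the pigeonhole over the $r$ components matches the paper's row-wise counting. But the route you take from there --- through a KKT/subgradient certificate at $x^\star$ and a min-ratio multiplier reduction --- has gaps that are not mere polish. First, the \emph{existence} of a multiplier certificate is a necessary-optimality statement requiring a constraint qualification (e.g.\ Slater); the Standing Assumption only gives feasibility, existence and uniqueness of the minimizer, so there may be no certificate to operate on, and this is not cured by the ``almost surely'' qualifier, which concerns the optimizers and not any genericity of the samples (the distribution is arbitrary and may have atoms). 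Second, the identity $\sum_j\gamma_j g_{i^\star}(\cdot,\delta^{(k_j)})\equiv 0$ guarantees, via Moreau--Rockafellar applied to the positive and negative parts, that \emph{some} selection $v_j\in\partial_x g_{i^\star}(x^\star,\delta^{(k_j)})$ satisfies $\sum_j\gamma_j v_j=0$; it does not make this hold for the particular subgradients appearing in the certificate you wish to modify, and you correctly flag this transfer as the crux without closing it. Third, driving the multiplier of the pair $(k_j,i^\star)$ to zero does not show that $\delta^{(k_j)}$ is not a support sample, since that sample may carry positive multipliers on other components; one must iterate the reduction across components and then argue that a sample whose multipliers all vanish in some valid certificate is removable without changing the optimum.

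The paper avoids every one of these issues by staying primal. After the same lifting (rewriting the constraint as $\tilde G(x)\tilde q(\delta)$ with $\tilde G(x)=[G(x)\ H(x)]$ and $\tilde q(\delta)=[q(\delta);1]\in\R^{m+1}$, which reduces the lemma to Proposition~\ref{prop:sc_mults}), it applies Radon's theorem not to the uncertainty features but to the points $\tilde G(x^\star),\tilde G(x^\star_1),\dots$ in $\R^{m+1}$, i.e.\ to the images of the removed-sample optimizers. The Radon point $\bar p$ satisfies all sampled constraints because each side of the partition is feasible for the other side's samples, and the associated convex combination $\bar x=\sum_{k\in K}\bar\kappa_k x^\star_k$ is then feasible by convexity of $g(\cdot,\delta)$ with cost $c^\top\bar x<c^\top x^\star$, contradicting optimality of $x^\star$ directly. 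No duality, no smoothness or subgradient selection, and no constraint qualification are needed. If you want to salvage your dual argument you would have to add a qualification hypothesis and resolve the subgradient-consistency issue; the primal Radon construction is the cleaner path and is the one the paper takes.
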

\fi

The proof relies on Radon's Theorem and resembles proofs used in statistical learning theory when determining the VC-dimension of hyperplanes \cite[Theorem 7.4.1]{anthony:biggs}. Intuitively speaking, the result can be explained by introducing auxiliary optimization variables $y_1 := G(x)\in\R^{r \times m}$ and $y_2 := H(x)\in\R^r$, implying that Helly's dimension is upper bounded by $r(m+1)$, the total number of auxiliary optimization variables.

\ifTwoColumn
   {}
\else
	Despite its simplicity, Lemma \ref{lem:convex_sep_r_sc} is fundamental in establishing the subsequent results of this paper. In the following two subsections, we study special cases of $\textsc{SP}_\text{sep}[\omega]$ when $q(\delta)$ enters the constraint $g(x,\delta)$ purely multiplicatively or additively. In the last two subsections we derive new bounds for the cases when the constraint  function depends affinely and quadratically on the uncertainty.
\fi

\subsection{Multiplicative dependence on the uncertainty}
Let us now consider the case in which the uncertainty $q(\delta)$ enters \emph{multiplicatively} in the constraints, that is
\ifTwoColumn
   	\begin{equation}\label{eq:SP_mult_sc}
		g(x,\delta) = g_\text{mult}(x,\delta) := G(x)\, q(\delta) + s(\delta).
	\end{equation}
\else
	\begin{equation*}
		g(x,\delta) = G(x)\, q(\delta).
	\end{equation*}
\fi
This is a special case of \eqref{eq:general_separable_constraints} and can be obtained by setting $H(x)=0$. 
\ifTwoColumn
   	{}
\else
	Let us now consider the CCP
	\begin{equation*}\label{eq:CCP_mult}
		\textsc{CCP}_\text{mult}(\epsilon): \ \left\{
		\begin{array}{l}
		\displaystyle \min_{x \in \mathcal{X}} \quad c^\top x \\
		\text{s.t. } \quad \bbP \left[ G(x)\, q(\delta) \leq 0 \right] \geq 1 - \epsilon,
		\end{array}
		\right.
	\end{equation*}
and also, for a given multi-sample $\omega=\{\delta^{(1)},\ldots,\delta^{(N)}\}$, the sampled program
	\begin{equation} \label{eq:SP_mult_sc}
		\textsc{SP}_\textnormal{mult}[\omega]: \ \left\{
		\begin{array}{l}
		\displaystyle \min_{ x \in \mathcal{X} } \quad c^\top x \\
		\text{s.t.} \quad\ G(x)\, q(\delta^{(j)}) \leq 0 \qquad  \forall j \in \{1,\ldots,N\}.
		\end{array}
		\right.
	\end{equation}
\fi
\ifTwoColumn
   	Then the number of support constraints with $g(x,\delta)=g_\text{mult}(x,\delta)$ is bounded as follows.
	\begin{proposition}\label{prop:sc_mults}
		Let $g(x,\delta)$ be as in \eqref{eq:SP_mult_sc}. Then $|\textnormal{sc}(\textsc{SP}[\omega])| \leq rm$ almost surely.
	\end{proposition}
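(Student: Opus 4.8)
The multiplicative constraint \eqref{eq:SP_mult_sc} is exactly the separable form \eqref{eq:general_separable_constraints} with $H(x)\equiv 0$, so Lemma~\ref{lem:convex_sep_r_sc} already delivers the weaker bound $|\textnormal{sc}(\textsc{SP}[\omega])|\leq r(m+1)$. The plan is to sharpen this by re-running the argument behind Lemma~\ref{lem:convex_sep_r_sc} while discarding the auxiliary variables contributed by the now-absent term $H$. Recall that the count $r(m+1)$ tallies the lifted variables $y_1:=G(x)\in\R^{r\times m}$ together with $y_2:=H(x)\in\R^{r}$; when $H\equiv 0$ the block $y_2$ vanishes and only the $rm$ entries of $y_1$ survive, which is precisely the target bound. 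Note that the $x$-independent term $s(\delta)$ never enters this count, since it contributes only a per-sample offset rather than a decision variable.

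Concretely, I would write the $i$th component as $g_i(x,\delta)=G_i(x)\,q(\delta)+s_i(\delta)$, where $G_i(x)\in\R^{1\times m}$ is the $i$th row of $G(x)$, and lift the decision via $x\mapsto y:=G(x)\in\R^{rm}$. In these coordinates every sampled constraint is affine in $y$, the only sample-dependent part being the offset $s(\delta^{(j)})$. I would then argue that each support constraint of \eqref{eq:SP_mult_sc} remains a support constraint of the lifted problem and invoke the fact that the number of support constraints of a convex program never exceeds the number of its decision variables \cite[Lemma~2.2]{calafiore:10}, now equal to $rm$. Equivalently, one applies the same Radon's-theorem counting used for Lemma~\ref{lem:convex_sep_r_sc}, but directly in $\R^{rm}$.

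The one genuinely delicate point is the lifting itself: the cost $c^\top x$ is linear in $x$ but need not be expressible as a linear (or even single-valued) function of $y=G(x)$, and the image $\{G(x):x\in\mathcal{X}\}$ need not be convex. This is exactly the issue already resolved in the proof of Lemma~\ref{lem:convex_sep_r_sc}, where one does not naively recast the program over $y$ but instead argues combinatorially: more than $rm$ simultaneously removable (support) constraints would, by Radon's theorem, force the convex hulls of two subfamilies of the corresponding lifted hyperplanes to intersect, contradicting the fact that each deletion strictly decreases the optimal cost. I therefore expect the main obstacle to be essentially bookkeeping, namely verifying that striking the $y_2=H(x)$ block leaves every step of that argument intact. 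This should be immediate, since $H$ entered the lemma's proof only through its $r$ coordinates, so the dimension count drops cleanly from $r(m+1)$ to $rm$.
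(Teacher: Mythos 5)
Your proposal is correct and takes essentially the same route as the paper: the paper in fact proves this proposition \emph{first}, directly, via the Radon's-theorem counting argument applied (row by row, for $r=1$ in $\R^{m}$) to the lifted optimizers $p_i := G(x_i^\star)$, producing from a Radon partition a convex combination $\bar{x}$ of the $x_k^\star$ that is feasible with strictly smaller cost — and then derives Lemma~\ref{lem:convex_sep_r_sc} from it by absorbing $H(x)$ into the augmented pair $\tilde{G}(x)=[G(x)\ H(x)]$, $\tilde{q}(\delta)=[q(\delta);\mathbf{1}]$, so the logical order is inverted relative to your presentation but the mathematics is identical. Your diagnosis of the lifting pitfall (the cost and the feasible set do not transfer to $y=G(x)$) and your fallback to the combinatorial Radon argument on the images $G(x_i^\star)$ rather than on a reformulated program is exactly how the paper's proof proceeds.
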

\else
	We next bound the number of support constraints for problem  $\textsc{SP}_\text{mult}[\omega]$.
	\begin{proposition}\label{prop:sc_mults}
		The number of support constraints of $\textsc{SP}_\textnormal{mult}[\omega]$ in \eqref{eq:SP_mult_sc} satisfies $|\textnormal{sc}(\textsc{SP}_\textnormal{mult}[\omega])| \leq rm$.
	\end{proposition}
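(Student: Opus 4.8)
The plan is to specialize the proof of Lemma \ref{lem:convex_sep_r_sc} to the case $H(x) \equiv 0$. Recall that the bound $r(m+1)$ there is the total number of \emph{free} auxiliary variables produced by the lift $y_1 := G(x) \in \R^{r\times m}$ (contributing $rm$ coordinates) and $y_2 := H(x) \in \R^{r}$ (contributing $r$ coordinates), together with the observation that every sampled constraint depends on $x$ only through the pair $(y_1,y_2)$ and is affine in it. In the multiplicative case \eqref{eq:SP_mult_sc} we have $H(x)\equiv 0$, so the block $y_2$ collapses to a constant and is no longer a free variable: each scalar constraint $g_i(x,\delta^{(j)}) = G_i(x)\,q(\delta^{(j)}) + s_i(\delta^{(j)}) \leq 0$ now depends on $x$ only through $y_1 = G(x)$. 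Re-running the lemma's argument with the single lifted block therefore ought to yield the sharper bound $rm$.

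Concretely, I would proceed as follows. First, vectorize the lift as $y := \mathrm{vec}(G(x)) \in \R^{rm}$ and note that for every sample $j$ and row $i$ the constraint is affine in $y$, namely $q(\delta^{(j)})^\top y_{(i)} + s_i(\delta^{(j)}) \leq 0$, where $y_{(i)}$ is the sub-block of $y$ carrying row $i$ of $G(x)$. Second, I would reproduce the Helly/Radon counting underlying the standard bound: let $x^\star$ be the (a.s.\ unique) optimizer of \eqref{eq:SP_mult_sc} with value $J^\star := c^\top x^\star$, choose a threshold $\gamma$ strictly between the largest optimal value obtained by deleting a single support constraint and $J^\star$, and for each candidate support index $k$ form the convex set $E_k$ of points meeting the cost level $\gamma$, all non-support constraints, and the $k$-th constraint. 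By construction $\bigcap_k E_k = \emptyset$ (such a point would beat $J^\star$), whereas each leave-one-out sub-intersection is nonempty (by the very definition of a support constraint). Third, I would transport this family to the $rm$-dimensional $y$-space and apply Helly's theorem there (the lifted analogue of the standard bound \cite[Lemma 2.2]{calafiore:10}) to conclude that at most $rm+1$ of the sets can participate; a standard sharpening exploiting the cost direction, exactly as in the proof of the standard bound, removes the off-by-one and gives $|\textnormal{sc}(\textsc{SP}_\textnormal{mult}[\omega])| \leq rm$.

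The step I expect to be the main obstacle is precisely this transport to the $rm$-dimensional space, because the linear cost $c^\top x$ does not factor through the lift $x \mapsto G(x)$: the map $G$ is compatible with the convexity required by Standing Assumption \ref{sa:convexity_uniqueGlobalMinimizer} but need be neither affine nor injective, so the image of a convex set under $G$ need not be convex and the implicit value function $V(y) := \inf\{c^\top x : x \in \mathcal{X},\, G(x) = y\}$ need not be convex. This is the one genuinely non-bookkeeping point, and it is exactly the mechanism that the proof of Lemma \ref{lem:convex_sep_r_sc} already resolves via Radon's theorem; my remaining task is only to verify that this mechanism goes through verbatim once the $y_2$ block is removed, so that the count is $rm$ rather than $r(m+1)$. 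Since deleting $H(x)$ only eliminates coordinates and introduces no new coupling, no additional difficulty should arise. As a sanity check, the scalar case $r=1$ reduces to counting the essential halfspaces $q(\delta^{(j)})^\top y + s(\delta^{(j)}) \leq 0$ in $\R^{m}$, where the bound $rm = m$ is transparent.
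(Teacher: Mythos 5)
Your high-level plan points at the right structure (work in the $rm$-dimensional lifted space, use a Radon-type counting argument, and worry about the fact that $G$ is neither affine nor injective), and you have correctly identified the genuine obstacle. But the concrete argument you give does not resolve that obstacle: you construct convex sets $E_k$ in $x$-space and propose to ``transport this family to the $rm$-dimensional $y$-space and apply Helly's theorem there,'' which fails for exactly the reason you state yourself --- $G(E_k)$ need not be convex, so Helly's theorem has nothing to bite on. You then defer the fix to ``the mechanism that the proof of Lemma \ref{lem:convex_sep_r_sc} already resolves via Radon's theorem,'' but that mechanism is precisely the content you were asked to supply (and, in the paper, the logical order is reversed: Proposition \ref{prop:sc_mults} is proved directly and Lemma \ref{lem:convex_sep_r_sc} is deduced from it by augmenting $\tilde G(x) := [G(x)\ H(x)]$, $\tilde q(\delta) := [q(\delta);\mathbf 1]$, so there is no independent proof of the lemma to specialize). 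As written, the proposal is a correct diagnosis of where the difficulty lies, not a proof.

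The missing idea is that Radon's theorem is applied to a \emph{finite set of points}, not to images of convex sets, which sidesteps the non-convexity of $G$ entirely. For $r=1$, suppose there were $m+1$ support constraints, and form the $m+2$ points $p_0 := G(x^\star), p_1 := G(x^\star_1),\ldots,p_{m+1} := G(x^\star_{m+1})$ in $\R^m$, where $x^\star_k$ are the leave-one-out optimizers. Radon gives a partition $K\cup L$ with a common point $\bar p\in\operatorname{conv}(\mathcal P_K)\cap\operatorname{conv}(\mathcal P_L)$; the crossover observation is that the $K$-representation of $\bar p$ certifies $\bar p^\top q(\delta^{(l)})+s(\delta^{(l)})\le 0$ for $l\in L\setminus\{0\}$ (each $x^\star_k$, $k\in K$, is feasible for every constraint $l\ne k$) and the $L$-representation certifies the constraints indexed by $K\setminus\{0\}$, so $\bar p$ satisfies all $m+1$ constraints. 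Choosing the side with $0\notin K$ and pulling back via the convex combination $\bar x := \sum_{k\in K}\bar\kappa_k x^\star_k$, convexity of $x\mapsto g(x,\delta)$ gives $g(\bar x,\delta^{(i)})\le\sum_{k\in K}\bar\kappa_k\bigl[p_k^\top q(\delta^{(i)})+s(\delta^{(i)})\bigr]=\bar p^\top q(\delta^{(i)})+s(\delta^{(i)})\le 0$, while linearity of the cost gives $c^\top\bar x<c^\top x^\star$ --- a contradiction. Note that this pull-back only needs the one-sided inequality from convexity, which is why non-affine $G$ is harmless; your generic ``cost-direction sharpening to remove the off-by-one'' is replaced by the specific device of including $G(x^\star)$ as the $(m+2)$-th Radon point and discarding its side of the partition. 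Finally, you should also say a word about the passage from $r=1$ to general $r$ (each of the $r$ rows contributes at most $m$ support constraints), which your sketch does not address.
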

	Intuitively speaking, the result can be explained by introducing a new optimization variable $y:=p(x)\in\R^m$. Since $y$ lives in $\R^m$, this suggests that Helly's dimension is bounded by $m$. 
\fi

Note that the results in Proposition \ref{prop:sc_mults} and Lemma \ref{lem:convex_sep_r_sc} also hold if a constant vector  $a\in\R^r$ is appended in the constraint functions, e.g.\ $g(x,\delta)=G(x)\, q(\delta) + s(\delta) + a $.

\subsection{Additive dependence on the uncertainty}\label{sec:additiveDependence}
We consider now the case in which the uncertainty enters \emph{additively} in the constraints, that is
\ifTwoColumn
   	\begin{equation}\label{eq:SP_add_sc}
		g(x,\delta) = g_\text{add}(x,\delta) := H(x) + s(\delta).
	\end{equation}
\else
	\begin{equation*}
		g(x,\delta) = q(\delta) + H(x).
	\end{equation*}
\fi

\ifTwoColumn
   	{}
\else
	This leads to the CCP 
	\begin{equation*}\label{eq:CCP_add}
	\textsc{CCP}_\text{add}(\epsilon): \ \left\{
	\begin{array}{l}
	\displaystyle \min_{x \in \mathcal{X}} \quad c^\top x \\
	\text{s.t. } \quad \bbP \left[ q(\delta) + H(x) \leq 0 ] \right) \geq 1 - \epsilon,
	\end{array}
	\right.
	\end{equation*}
	and also, for a given multi-sample $\omega=\{\delta^{(1)},\ldots,\delta^{(N)}\}$, the sampled program
	\begin{equation} \label{eq:SP_add_sc}
	\textsc{SP}_\textnormal{add}[\omega]: \ \left\{
	\begin{array}{l}
	\displaystyle \min_{ x \in \mathcal{X} } \quad c^\top x \\
	\text{s.t.} \quad\ q(\delta^{(j)}) + H(x) \leq 0 \qquad \forall j \in \{1,\ldots,N\},
	\end{array}
	\right.
	\end{equation}
	where the inequalities are interpreted term-wise. 
\fi
\ifTwoColumn
	\begin{proposition}\label{prop:sc_add}
	Let $g(x,\delta)$ be as in \eqref{eq:SP_add_sc}.	Then $|\textnormal{sc}(\textsc{SP}[\omega])| \leq r$ almost surely.
	\end{proposition}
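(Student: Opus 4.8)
The plan is to exploit the fact that the additive structure $g_\text{add}(x,\delta)=H(x)+s(\delta)$ decouples completely across the $r$ components, so that bounding the number of support constraints reduces to a purely combinatorial domination argument (no convexity beyond what is needed for the optimizer to be well defined). Writing the $j$-th sampled constraint element-wise, $g_\text{add}(x,\delta^{(j)})\leq 0$ is equivalent to $H_i(x)\leq -s_i(\delta^{(j)})$ for all $i\in\{1,\ldots,r\}$. The crucial observation is that, for a fixed component $i$, these scalar inequalities are totally ordered by the scalar $s_i(\delta^{(j)})$: the sample attaining the largest value of $s_i(\delta^{(j)})$ yields the tightest constraint in that component, and every other sample's $i$-th constraint is implied by it.

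First I would establish the key claim that a sample $\delta^{(k)}$ can be a support sample only if it is the strict maximizer of $j\mapsto s_i(\delta^{(j)})$ for at least one component $i$. To prove the contrapositive, suppose that for every $i$ there exists some $j\neq k$ with $s_i(\delta^{(j)})\geq s_i(\delta^{(k)})$. Then $H_i(x)\leq -s_i(\delta^{(j)})\leq -s_i(\delta^{(k)})$ shows that the $i$-th constraint of sample $k$ is redundant given the constraints contributed by the remaining samples, and since this holds for every $i$, the feasible set of $\textsc{SP}[\omega]$ coincides with that of $\textsc{SP}[\omega\setminus\{\delta^{(k)}\}]$. By the uniqueness of the minimizer in Standing Assumption \ref{sa:convexity_uniqueGlobalMinimizer} this forces $x_k^\star=x^\star$, so $c^\top x_k^\star = c^\top x^\star$ and $\delta^{(k)}$ is not a support sample.

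To conclude, I would note that each component $i$ admits at most one strict maximizer of $j\mapsto s_i(\delta^{(j)})$, so the set of support samples is contained in a collection of at most $r$ distinct indices, giving $|\textnormal{sc}(\textsc{SP}[\omega])|\leq r$. As an independent check, the same bound follows from Lemma \ref{lem:convex_sep_r_sc} by viewing $g_\text{add}$ as the separable form with no multiplicative term, i.e.\ formally $m=0$, so that $r(m+1)=r$.

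The only delicate point, and the step I expect to require the most care, is the treatment of ties in the almost-sure statement: if two samples attain the same maximal value of $s_i$ in some component, then neither is a strict maximizer, and the domination argument shows that both of their $i$-th constraints are mutually redundant, so that component contributes no support sample at all. Hence the count $\leq r$ in fact holds deterministically on the event where the optimizer exists and is unique, and the ``almost surely'' qualifier merely inherits the exceptional null set already present in Standing Assumption \ref{sa:convexity_uniqueGlobalMinimizer}.
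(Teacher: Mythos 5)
Your proof is correct, but it follows a genuinely different route from the paper's. The paper disposes of this proposition in one line: it invokes Lemma~\ref{lem:convex_sep_r_sc} with $G(x)=0$ in \eqref{eq:general_separable_constraints} (so that effectively $m=0$ and $r(m+1)=r$), which in turn rests on the Radon's-theorem argument behind Proposition~\ref{prop:sc_mults} --- exactly the ``independent check'' you mention in passing. Your main argument instead is a direct domination count: for each component $i$ the sampled constraints $H_i(x)\leq -s_i(\delta^{(j)})$ are totally ordered by $s_i(\delta^{(j)})$, a sample whose every component is weakly dominated by some other sample leaves the feasible set unchanged when removed (hence equal optimal values, hence not a support sample), and each component contributes at most one strict maximizer. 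This is logically complete, handles ties cleanly, and is in fact more elementary than the paper's proof --- it uses no convexity of $H$ and no Radon's theorem, only the existence of a well-defined optimal value, so it makes transparent \emph{why} the additive case is so much cheaper than the general separable case. What the paper's route buys is economy and uniformity: the additive, multiplicative, affine and quadratic cases all fall out of the single Lemma~\ref{lem:convex_sep_r_sc}, whereas your domination argument is specific to the additive structure and would not extend to the other cases. One cosmetic remark: your reduction via Lemma~\ref{lem:convex_sep_r_sc} with ``formally $m=0$'' is the same reading the paper intends with $G(x)=0$, so the two justifications agree; either one alone would suffice.
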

\else
	\begin{proposition}\label{prop:sc_add}
	The number of support constraints of $\textsc{SP}_\textnormal{add}[\omega]$ in \eqref{eq:SP_add_sc} satisfies $|\textnormal{sc}(\textsc{SP}_\textnormal{add}[\omega])| \leq r$.
	\end{proposition}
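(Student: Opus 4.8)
The plan is to exploit the fact that, in the additive case $g_\text{add}(x,\delta)=H(x)+s(\delta)$, the $N$ sampled constraints decouple completely across the $r$ output components, and that within each component they collapse to a single pointwise maximum. Interpreting the inequalities componentwise, the feasible set of the scenario program with constraint \eqref{eq:SP_add_sc} is
\begin{equation*}
\bigl\{\,x\in\mathcal{X} : H_i(x)\leq -s_i(\delta^{(j)})\ \ \forall i\in\{1,\ldots,r\},\ \forall j\in\{1,\ldots,N\}\,\bigr\}.
\end{equation*}
Because $H_i(x)$ does not depend on $j$, this set is unchanged if, for each $i$, one retains only the tightest of the $N$ constraints, namely $H_i(x)\leq -\max_{l} s_i(\delta^{(l)})$; thus only the sample attaining the largest value of $s_i$ can constrain the optimizer in component $i$.

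First I would make precise which samples can bind. For each $i\in\{1,\ldots,r\}$ put $J_i:=\{\,j : s_i(\delta^{(j)})=\max_{l} s_i(\delta^{(l)})\,\}$, the set of samples attaining the maximum of the $i$th entry of $s$. I claim that every support sample $\delta^{(k)}$ is the \emph{unique} maximizer of some component, i.e.\ $J_i=\{k\}$ for at least one $i$. Indeed, suppose that for every $i$ we have either $k\notin J_i$ or $\{k\}\subsetneq J_i$. In the first case $\delta^{(k)}$ does not attain the maximum in component $i$, and in the second some other sample does; in both cases deleting $\delta^{(k)}$ leaves $\max_{l} s_i(\delta^{(l)})$ unchanged. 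Hence deleting $\delta^{(k)}$ leaves the feasible set above, and therefore the optimal cost of the program, unchanged, so by definition $\delta^{(k)}$ is not a support sample.

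Next I would count. By the claim, sending each support sample $\delta^{(k)}$ to a component $i$ with $J_i=\{k\}$ defines a map into $\{1,\ldots,r\}$, and this map is injective: if two support samples were sent to the same $i$, then $\{k\}=J_i=\{k'\}$ would force $k=k'$. Since the target has $r$ elements, $|\textnormal{sc}(\textsc{SP}[\omega])|\leq r$, as claimed.

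I do not expect a genuine obstacle here. The argument is purely combinatorial and holds for \emph{every} realization $\omega$ for which a minimizer exists and is unique; the sole purpose of the ``almost surely'' qualifier is to invoke Standing Assumption \ref{sa:convexity_uniqueGlobalMinimizer}, which guarantees that this minimizer is well defined. The only delicate point is the handling of ties among the $s_i(\delta^{(j)})$, and the ``unique maximizer'' formulation of the claim dispatches these cleanly without any distributional assumption. As a consistency check, the same bound also follows from Lemma \ref{lem:convex_sep_r_sc} in the degenerate case $m=0$, where the multiplicative term $G(x)q(\delta)$ vanishes and $r(m+1)$ collapses to $r$; I would nevertheless keep the direct proof as primary, since it explains transparently why exactly one active constraint survives per output dimension.
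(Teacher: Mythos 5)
Your proof is correct, but it takes a genuinely different route from the paper's. The paper dispatches Proposition \ref{prop:sc_add} in one line by invoking Lemma \ref{lem:convex_sep_r_sc} with $G(x)=0$ (and, implicitly, $m=0$, so that $r(m+1)$ collapses to $r$) --- exactly the ``consistency check'' you mention at the end --- and that reduction ultimately rests on the Radon's-theorem argument in the proof of Proposition \ref{prop:sc_mults}. You instead argue directly and combinatorially: the feasible set depends on $\omega$ only through the $r$ numbers $\max_l s_i(\delta^{(l)})$, a support sample must be the \emph{unique} maximizer of some component, and the resulting map into $\{1,\ldots,r\}$ is injective. Your argument is sound (the tie-handling via the ``unique maximizer'' formulation is exactly right, and removal of a non-unique or non-maximizing sample indeed leaves the feasible set, hence the optimal cost, unchanged), and it buys more than the paper's proof: it needs no convexity of $H$, no Radon partition, and it holds deterministically for \emph{every} multi-sample rather than almost surely; it also identifies exactly which samples can be of support. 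What it gives up is generality --- it exploits that the $x$-dependence is sample-independent, so it does not extend to the multiplicative or separable cases, whereas the paper's Lemma \ref{lem:convex_sep_r_sc} handles all of them uniformly. Both proofs are valid; yours is the more transparent one for this special case.
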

\fi
\begin{proof}
	Invoke Lemma \ref{lem:convex_sep_r_sc} with $G(x)=0$ in \eqref{eq:general_separable_constraints}. 
\end{proof}

The bound on the number of support constraints for the example in \eqref{eq:illExample} is now readily derived from Proposition \ref{prop:sc_add} with $r=1$ and by realizing that the minimizer in \eqref{eq:illExample} is finite and exists for all multi-sample realizations.

\subsection{Affine dependence on the uncertainty}
We now consider the special case where $g(x,\delta)$ consists of $r$ constraints, where each of them depends affinely on $\delta$. That is, with $q(\delta)=\delta$ and for given  $G: \R^{n} \to \R^{r\times d}$ and $H: \R^n \to \R^r$, we consider 
\ifTwoColumn
   	\begin{equation}\label{eq:SP_affine_sc}
		g(x,\delta) = g_\text{aff}(x,\delta) := G(x)\delta + H(x).
	\end{equation}
\else
	\begin{equation}\label{eq:CCP_affine_sc}
		\textsc{CCP}_\text{aff}(\epsilon): \ \left\{
		\begin{array}{l}
		\displaystyle \min_{x \in \mathcal{X}}\ c^\top x \\
		\text{s.t.} \ \ \bbP \left[ G(x) \delta + H(x) \leq 0 \right] \geq 1 - \epsilon
		\end{array}
		\right.
	\end{equation}
and also, for given $\omega=\{\delta^{(1)},\ldots,\delta^{(N)}\}$, the sampled program
	\begin{equation} \label{eq:SP_affine_sc}
		\textsc{SP}_\text{aff}[\omega]: \ \left\{
		\begin{array}{l}
		\displaystyle \min_{ x \in \mathcal{X} } \quad c^\top x \\
		\text{s.t. } \quad G(x) \delta^{(j)} + H(x) \leq 0 \qquad \forall j \in \{1,\ldots,N\}.
		\end{array}
		\right.
	\end{equation}
\fi
\ifTwoColumn
	\begin{proposition}\label{prop:convex_affine_r_sc}
	Let $g(x,\delta)$ be as in \eqref{eq:SP_affine_sc}. Then $|\textnormal{sc}(\textsc{SP}[\omega])| \leq \zeta_\textnormal{aff} := r(d+1)$ almost surely.
	\end{proposition}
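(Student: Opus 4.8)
The plan is to recognize the affine constraint function in \eqref{eq:SP_affine_sc} as a direct instance of the separable template \eqref{eq:general_separable_constraints}, so that the bound follows immediately from Lemma \ref{lem:convex_sep_r_sc} with no new argument required. Concretely, I would set $q(\delta) = \delta$ in \eqref{eq:general_separable_constraints}, i.e.\ take $q$ to be the identity map on $\Delta \subseteq \R^d$. Then $g_\text{aff}(x,\delta) = G(x)\delta + H(x)$ matches $G(x)\,q(\delta) + H(x)$ verbatim, with the matrix-valued map $G:\R^n \to \R^{r\times d}$ and the vector-valued map $H:\R^n\to\R^r$ playing exactly the roles demanded by the lemma.

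The only bookkeeping step is to identify the inner dimension $m$. Since $q(\delta)=\delta$ takes values in $\R^d$, we have $m=d$. Lemma \ref{lem:convex_sep_r_sc} then yields $|\textnormal{sc}(\textsc{SP}[\omega])| \leq r(m+1) = r(d+1) = \zeta_\text{aff}$ almost surely, which is precisely the claim. This is structurally identical to the proof of Proposition \ref{prop:sc_add}, which specializes the same lemma by instead setting $G(x)=0$; here we specialize it by pinning down $q$ rather than $G$ or $H$.

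Since the substantive work—the Radon-theorem argument that lifts $x$ to the auxiliary variables $y_1 := G(x)\in\R^{r\times d}$ and $y_2 := H(x)\in\R^r$, for a total of $r(d+1)$ coordinates—is already carried out in Lemma \ref{lem:convex_sep_r_sc}, I expect essentially no obstacle here beyond confirming that the affine form falls under the lemma's hypotheses. The one point worth verifying is that Standing Assumption \ref{sa:convexity_uniqueGlobalMinimizer} is respected: convexity and lower semicontinuity of $x\mapsto g_\text{aff}(x,\delta)$, feasibility and uniqueness of the optimizer, and measurability of $\delta\mapsto g_\text{aff}(x,\delta)$ are postulated for the problem under consideration, and the identity choice $q(\delta)=\delta$ is trivially measurable, so the specialization introduces no additional requirements. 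Hence the proposition is a one-line corollary of the fundamental lemma.
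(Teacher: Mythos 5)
Your proposal is correct and matches the paper's own proof, which likewise obtains the result by invoking Lemma \ref{lem:convex_sep_r_sc} with $q(\delta)=\delta$, $s(\delta)=0$, and $m=d$. The only cosmetic omission is that you do not explicitly note $s(\delta)=0$ when matching \eqref{eq:SP_affine_sc} to the separable template \eqref{eq:general_separable_constraints}, but this is implicit in your identification and changes nothing.
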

\else
	\begin{proposition}\label{prop:convex_affine_r_sc}
	The number of support constraints of $\textsc{SP}_\textnormal{aff}[\omega]$ in \eqref{eq:SP_affine_sc} satisfies $|\textnormal{sc}(\textsc{SP}_\textnormal{aff}[\omega])| \leq r(d+1)$.
	\end{proposition}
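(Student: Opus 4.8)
The plan is to recognize that the affine constraint function in \eqref{eq:SP_affine_sc} is merely a specialization of the separable structure already treated in Lemma \ref{lem:convex_sep_r_sc}, and to invoke that lemma directly. Setting $q(\delta) := \delta$ in \eqref{eq:general_separable_constraints} identifies $g_\text{aff}(x,\delta) = G(x)\delta + H(x)$ with the separable form $G(x)\,q(\delta) + H(x)$, where now the map $q: \Delta \to \R^d$ is the identity, so that the inner dimension is $m = d$.

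First I would verify that the hypotheses of Lemma \ref{lem:convex_sep_r_sc} are met under this identification: the data $G: \R^n \to \R^{r\times d}$ and $H: \R^n \to \R^r$ are precisely those required by \eqref{eq:general_separable_constraints} with $m=d$, and the identity map $q(\delta) = \delta$ is trivially measurable, so Standing Assumption \ref{sa:convexity_uniqueGlobalMinimizer} carries over unchanged. Then Lemma \ref{lem:convex_sep_r_sc} yields $|\textnormal{sc}(\textsc{SP}_\text{aff}[\omega])| \leq r(m+1) = r(d+1)$ almost surely, which is exactly the claimed bound with $\zeta_\text{aff} := r(d+1)$.

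Since the entire combinatorial argument underlying the bound (the Radon-theorem reduction via the auxiliary variables $y_1 = G(x)$ and $y_2 = H(x)$) is already encapsulated in Lemma \ref{lem:convex_sep_r_sc}, I do not expect any genuine obstacle here; the proof is a one-line specialization, entirely analogous to that of Proposition \ref{prop:sc_add}, which invoked the same lemma with $G(x) = 0$. The only point worth a moment's care is confirming that no separate additive uncertainty term is needed, which is automatic because in the affine model the full $\delta$-dependence is already carried by the multiplicative term $G(x)\delta$.
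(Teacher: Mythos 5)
Your proposal is correct and matches the paper's own proof, which likewise obtains the bound by invoking Lemma \ref{lem:convex_sep_r_sc} with $q(\delta)=\delta$, $s(\delta)=0$, and $m=d$. Nothing is missing.
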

\fi
\begin{proof}
	Invoke Lemma \ref{lem:convex_sep_r_sc} with $q(\delta)=\delta$, $s(\delta)=0$, and $m=d$.
\end{proof}
Proposition \ref{prop:convex_affine_r_sc} combined with \cite[Theorem 3.3]{calafiore:10} immediately leads to the following result.
\ifTwoColumn
   	\begin{theorem}\label{thm:convex_affine_r_sc}
	Let $g(x,\delta)$ be as in \eqref{eq:SP_affine_sc}. For all $\epsilon, \beta\in(0,1)$, if $N$ satisfies \eqref{eq:N_choice_zeta} with $r(d+1)$ in place of $\zeta$, then with confidence no smaller than $1-\beta$, the solution of $\textsc{SP}[\omega]$ in \eqref{eq:SP_generic} is feasible for $\textsc{CCP}(\epsilon)$ in \eqref{eq:CCP_generic}.
	\end{theorem}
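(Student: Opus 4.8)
The plan is to obtain the statement as an immediate corollary, chaining Proposition~\ref{prop:convex_affine_r_sc} with the feasibility guarantee of \cite[Theorem 3.3]{calafiore:10}; the only nontrivial point is a short monotonicity argument that lets us replace the true Helly's dimension by the explicit upper bound $r(d+1)$. First I would recall that \cite[Theorem 3.3]{calafiore:10} yields the desired $(1-\beta)$-confidence feasibility of the solution of $\textsc{SP}[\omega]$ for $\textsc{CCP}(\epsilon)$ precisely when the sample size $N$ satisfies \eqref{eq:N_choice_zeta} with the \emph{true} Helly's dimension $\zeta$ as the upper summation limit. The hypothesis of the theorem, however, only guarantees that $N$ satisfies \eqref{eq:N_choice_zeta} with $r(d+1)$ in that role, so the task is to transfer this condition back to one written in terms of $\zeta$.

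The key step uses Proposition~\ref{prop:convex_affine_r_sc}, which gives $|\textnormal{sc}(\textsc{SP}[\omega])| \leq r(d+1)$ almost surely for every finite $N \geq 1$; by the definition of Helly's dimension this is exactly the bound $\zeta \leq r(d+1)$. It then remains to observe that, as a function of its upper summation index $k$, the map $k \mapsto \sum_{j=0}^{k-1}{N \choose j}\epsilon^j(1-\epsilon)^{N-j}$ is nondecreasing, since every summand is nonnegative. Combining this monotonicity with $\zeta \leq r(d+1)$ yields
\begin{equation*}
\sum_{j=0}^{\zeta-1}{N \choose j}\epsilon^j(1-\epsilon)^{N-j} \;\leq\; \sum_{j=0}^{r(d+1)-1}{N \choose j}\epsilon^j(1-\epsilon)^{N-j} \;\leq\; \beta,
\end{equation*}
where the last inequality is the hypothesis on $N$. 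Hence $N$ also satisfies \eqref{eq:N_choice_zeta} with the true $\zeta$, and \cite[Theorem 3.3]{calafiore:10} delivers the conclusion.

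I do not expect a genuine obstacle: the result is essentially a corollary, and the only point requiring care is the monotonicity of the partial binomial tail sum, which makes replacing $\zeta$ by any valid upper bound legitimate---enlarging the summation limit can only make \eqref{eq:N_choice_zeta} harder to satisfy, never easier, so the guarantee becomes at most more conservative. For completeness I would note that Standing Assumption~\ref{sa:convexity_uniqueGlobalMinimizer} is in force throughout, so that the optimizer of $\textsc{SP}[\omega]$ in \eqref{eq:SP_generic} exists, is unique almost surely, and \cite[Theorem 3.3]{calafiore:10} is indeed applicable to the affine constraint \eqref{eq:SP_affine_sc}.
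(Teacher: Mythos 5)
Your proposal is correct and follows exactly the route the paper takes: the paper states that Proposition~\ref{prop:convex_affine_r_sc} combined with \cite[Theorem 3.3]{calafiore:10} ``immediately leads to'' the theorem, and your monotonicity observation about the partial binomial tail sum is precisely the (implicit) step that justifies substituting the upper bound $r(d+1)$ for the true Helly's dimension $\zeta$. Nothing is missing; you have simply made explicit a detail the paper leaves to the reader.
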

\else
	\begin{theorem}\label{thm:convex_affine_r_sc}
	For all $\epsilon, \beta\in(0,1)$, if $N$ satisfies \eqref{eq:N_choice_zeta} with $r(d+1)$ in place of $\zeta$, then with confidence no smaller than $1-\beta$, the solution of $\textsc{SP}_\textnormal{aff}[\omega]$ in \eqref{eq:SP_affine_sc} is feasible for $\textsc{CCP}_\textnormal{aff}(\epsilon)$ in \eqref{eq:CCP_affine_sc}.
	\end{theorem}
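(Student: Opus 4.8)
The plan is to obtain this theorem as an immediate consequence of Proposition~\ref{prop:convex_affine_r_sc} together with the feasibility guarantee of \cite[Theorem 3.3]{calafiore:10}, using only a monotonicity observation on the binomial tail in \eqref{eq:N_choice_zeta}.

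First I would invoke Proposition~\ref{prop:convex_affine_r_sc}, which states that $|\textnormal{sc}(\textsc{SP}_\textnormal{aff}[\omega])| \leq r(d+1)$ almost surely. By the definition of Helly's dimension as the smallest integer $\zeta$ dominating $\operatorname{ess}\sup_{\omega} |\textnormal{sc}(\textsc{SP}_\textnormal{aff}[\omega])|$, this almost-sure bound yields $\zeta \leq r(d+1)$ for $\textsc{SP}_\textnormal{aff}[\omega]$.

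Next I would record the monotonicity of the left-hand side of \eqref{eq:N_choice_zeta} in the number of summed terms: since each summand $\binom{N}{j}\epsilon^j(1-\epsilon)^{N-j}$ is nonnegative, enlarging the upper limit of the sum can only increase its value, so that
\[
\sum_{j=0}^{\zeta-1} \binom{N}{j} \epsilon^j (1-\epsilon)^{N-j} \;\leq\; \sum_{j=0}^{r(d+1)-1} \binom{N}{j} \epsilon^j (1-\epsilon)^{N-j}.
\]
Consequently, if $N$ is chosen so that the right-hand side is at most $\beta$ --- which is exactly the hypothesis that $N$ satisfies \eqref{eq:N_choice_zeta} with $r(d+1)$ in place of $\zeta$ --- then the same $N$ also satisfies \eqref{eq:N_choice_zeta} with the true Helly's dimension $\zeta \leq r(d+1)$ of $\textsc{SP}_\textnormal{aff}[\omega]$.

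Finally I would apply \cite[Theorem 3.3]{calafiore:10} with this actual $\zeta$: since $N$ meets the required inequality for the genuine Helly's dimension, the theorem guarantees that, with confidence at least $1-\beta$, the optimizer of $\textsc{SP}_\textnormal{aff}[\omega]$ is feasible for $\textsc{CCP}_\textnormal{aff}(\epsilon)$. I do not anticipate any substantive obstacle, since the result is a direct corollary; the only points deserving care are the monotonicity step that legitimizes substituting the upper bound $r(d+1)$ for the unknown $\zeta$, and the observation that the almost-sure support-constraint bound of Proposition~\ref{prop:convex_affine_r_sc} indeed controls the essential supremum appearing in the definition of Helly's dimension.
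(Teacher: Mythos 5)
Your proposal is correct and follows exactly the paper's route: the paper derives this theorem as an immediate consequence of Proposition~\ref{prop:convex_affine_r_sc} combined with \cite[Theorem 3.3]{calafiore:10}, and your only addition is to spell out the (valid) monotonicity argument that justifies substituting the upper bound $r(d+1)$ for the unknown Helly's dimension in \eqref{eq:N_choice_zeta}.
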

\fi

Whenever  $r(d+1)$ is lower than the s-rank, then our result improves on existing sample complexity bounds.

\subsection{Quadratic dependence on the uncertainty}\label{sec:quadraticDependence}
We now extend the previous results to the case of quadratic mapping $\delta \mapsto g(x,\delta)$. Considering $r\geq 1$ constraints, we have functions $g_i:\R^n\times\Delta\to\R$, with $i\in\{1,\ldots,r\}$, defined as $ g_i(x,\delta) := \delta^\top A_i(x) \delta + b_i(x)^\top \delta + c_i(x)$, where $A_i: \R^n\to\R^{d \times d},\ b_i: \R^n\to\R^d$, and $c_i:\R^n\to\R$. Without loss of generality, for all $i\in\{1,\ldots,r\}$ and all $x\in\R^n$, we can assume $A_i(x)=A_i(x)^\top$. Define now 
\ifTwoColumn
   	\begin{equation}\label{eq:SP_quadratic_sc}
		g(x,\delta) = g_\text{quad}(x,\delta) := \max_{i\in\{1,\ldots,r\}} g_i(x,\delta).
	\end{equation}
	\ifTwoColumn
	\else
		We now upper bound the number of support constraints with $g(x,\delta)=g_\text{quad}(x,\delta)$, and derive the corresponding sample complexity.
	\fi
\else
	\begin{equation*}
		g_\text{quad}(x,\delta) := \max_{i=1,\ldots,r} g_i(x,\delta),
	\end{equation*}
	and consider the chance constrained program
	\begin{equation}\label{eq:CCP_quadratic_sc}
		\textsc{CCP}_\text{quad}(\epsilon): \ \left\{
		\begin{array}{l}
		\displaystyle \min_{x \in \mathcal{X}} \quad c^\top x \\
		\text{s.t. } \quad \bbP \left[ g_\text{quad}(x,\delta) \leq 0 \right] \geq 1 - \epsilon,
		\end{array}
		\right.
	\end{equation}
	and also, for given multi-sample $\omega\in\Delta^N$, its scenario counterpart
	\begin{equation} \label{eq:SP_quadratic_sc}
		\textsc{SP}_\text{quad}[\omega]: \ \left\{
		\begin{array}{l}
		\displaystyle \min_{ x \in \mathcal{X} } \quad c^\top x \\
		\text{s.t. } \quad {g}_\text{quad}(x,\delta^{(j)})\leq0 \qquad \forall j \in \{1,\ldots,N\}.
		\end{array}
		\right.
	\end{equation}
	Similar to the affine case, we upper bound the number of support constraints for problem $\textsc{SP}_\text{quad}[\omega]$ in \eqref{eq:SP_quadratic_sc} and derive the corresponding sample complexity.
\fi
\ifTwoColumn
	\begin{proposition}\label{prop:convex_quad_r_sc}
	Let $g(x,\delta)$ be as in \eqref{eq:SP_quadratic_sc}. Then  $|\textnormal{sc}(\textsc{SP}[\omega])| \leq \zeta^\textnormal{quad} := rd(d+3)/2+r$ almost surely.
	\end{proposition}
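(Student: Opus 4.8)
The plan is to reduce the quadratic case to the separable structure of \eqref{eq:general_separable_constraints} by \emph{lifting} the uncertainty into a higher-dimensional vector of monomials, and then to invoke Lemma \ref{lem:convex_sep_r_sc}. The key observation is that, since each $A_i(x)$ is symmetric, the quadratic form $\delta^\top A_i(x)\delta$ depends on $\delta$ only through the $d(d+1)/2$ products $\delta_k\delta_l$ with $1\le k\le l\le d$, while the linear term $b_i(x)^\top\delta$ depends on the $d$ entries $\delta_1,\ldots,\delta_d$. Counting these distinct monomials is what controls the effective uncertainty dimension $m$.

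First I would define the lifted uncertainty map $q:\R^d\to\R^m$ by collecting all distinct monomials of degree one and two, namely the $d$ linear terms $\delta_k$ and the $d(d+1)/2$ quadratic terms $\delta_k\delta_l$ (for $k\le l$). This gives $m = d + d(d+1)/2 = d(d+3)/2$. With this choice, each $g_i(x,\delta)$ can be rewritten as $G_i(x)\,q(\delta) + c_i(x)$, where the row vector $G_i(x)\in\R^{1\times m}$ carries the coefficients $(A_i(x))_{kk}$, $2(A_i(x))_{kl}$ (for $k<l$), and $(b_i(x))_k$ read off from the symmetric representation, and the constant-in-$\delta$ part is $H_i(x) := c_i(x)$.

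Next I would stack these rows to form $G(x)\in\R^{r\times m}$ and $H(x)\in\R^r$, so that the vector-valued constraint $g(x,\delta) = (g_1(x,\delta),\ldots,g_r(x,\delta))^\top$ — whose element-wise nonpositivity is equivalent to $g_\text{quad}(x,\delta)\le 0$ — is exactly of the separable form \eqref{eq:general_separable_constraints} with $s(\delta)=0$. Applying Lemma \ref{lem:convex_sep_r_sc} with this value of $m$ then yields $|\textnormal{sc}(\textsc{SP}[\omega])| \le r(m+1) = r\big(d(d+3)/2 + 1\big) = rd(d+3)/2 + r = \zeta^\text{quad}$ almost surely.

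I do not anticipate a serious obstacle; the only point requiring care is the symmetry reduction, which is what sharpens the count from $d^2$ to $d(d+1)/2$ quadratic monomials and is essential for obtaining the stated $m = d(d+3)/2$ rather than a larger value. Measurability of $q$ is immediate, being polynomial, and convexity of $x\mapsto g(x,\delta)$ is guaranteed by Standing Assumption \ref{sa:convexity_uniqueGlobalMinimizer}, so no verification is needed beyond matching the separable template.
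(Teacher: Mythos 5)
Your proof is correct and follows essentially the same route as the paper: both arguments lift $\delta$ to the vector of its degree-one and (symmetry-reduced) degree-two monomials and then invoke the separable/multiplicative support-constraint bound. The only cosmetic difference is that you absorb $c_i(x)$ into $H(x)$ and cite Lemma~\ref{lem:convex_sep_r_sc} with $m=d(d+3)/2$, whereas the paper appends a constant $1$ to $q(\delta)$ and cites Proposition~\ref{prop:sc_mults} with $m=d(d+3)/2+1$ --- an identical reduction, since Lemma~\ref{lem:convex_sep_r_sc} is itself obtained from Proposition~\ref{prop:sc_mults} by exactly that appending trick.
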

	It now follows immediately from \cite[Theorem 3.3]{calafiore:10}:
	\begin{theorem}\label{thm:convex_quad_r_sc}
		Let $g(x,\delta)$ be as in \eqref{eq:SP_quadratic_sc}. For all $\epsilon\in(0,1)$ and $\beta\in(0,1)$, if $N$ satisfies \eqref{eq:N_choice_zeta} with $rd(d+3)/2+r$ in place of $\zeta$, then with confidence no smaller than $1-\beta$, the solution of $\textsc{SP}[\omega]$ in \eqref{eq:SP_generic} is feasible for $\textsc{CCP}(\epsilon)$ in \eqref{eq:CCP_generic}.
	\end{theorem}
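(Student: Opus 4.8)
The plan is to read Theorem~\ref{thm:convex_quad_r_sc} as the combination of two ingredients: a purely combinatorial bound on Helly's dimension for the quadratic problem, supplied by Proposition~\ref{prop:convex_quad_r_sc}, and the distribution-free feasibility guarantee of the scenario approach \cite[Theorem~3.3]{calafiore:10}. Proposition~\ref{prop:convex_quad_r_sc} certifies that, almost surely, $|\textnormal{sc}(\textsc{SP}[\omega])|\le rd(d+3)/2+r$ for every finite $N$, so Helly's dimension obeys $\zeta\le rd(d+3)/2+r$. The feasibility guarantee needs only an \emph{upper} bound on $\zeta$: the left-hand side of \eqref{eq:N_choice_zeta}, being a sum of nonnegative terms, is nondecreasing in its upper summation index, so enforcing the inequality with the overestimate $rd(d+3)/2+r$ in place of $\zeta$ also enforces it for the true $\zeta$. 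Hence, once Proposition~\ref{prop:convex_quad_r_sc} is available, the theorem is an immediate corollary, and all the content lives in that proposition.

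I would therefore spend the effort on Proposition~\ref{prop:convex_quad_r_sc}, proving it by \emph{lifting} the quadratic dependence on $\delta$ into a linear (separable) one and then invoking Lemma~\ref{lem:convex_sep_r_sc}. The idea is to collect the monomials of $\delta$ of degree one and two into a single feature vector $q(\delta)$: the $d$ linear terms $\delta_1,\dots,\delta_d$ together with the $d(d+1)/2$ quadratic terms $\delta_j\delta_k$ for $1\le j\le k\le d$. Exploiting the (w.l.o.g.) symmetry of each $A_i(x)$, every component rewrites as $g_i(x,\delta)=G_i(x)\,q(\delta)+c_i(x)$, where the row $G_i(x)$ stores the entries of $A_i(x)$ and $b_i(x)$ and $c_i(x)$ is the $\delta$-independent part. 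Stacking the $r$ rows, and recalling that the element-wise constraint $g(x,\delta)\le 0$ coincides with $g_\textnormal{quad}(x,\delta)\le 0$ in \eqref{eq:SP_quadratic_sc}, recovers exactly the separable form \eqref{eq:general_separable_constraints} with $H(x)=(c_1(x),\dots,c_r(x))^\top$ and feature dimension $m=d+d(d+1)/2=d(d+3)/2$.

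Applying Lemma~\ref{lem:convex_sep_r_sc} to this representation then gives $|\textnormal{sc}(\textsc{SP}[\omega])|\le r(m+1)=rd(d+3)/2+r$ almost surely, which is Proposition~\ref{prop:convex_quad_r_sc}. Two points need care. First, the lift is performed only in the uncertainty variable, so it does not touch the map $x\mapsto g(x,\delta)$; consequently Standing Assumption~\ref{sa:convexity_uniqueGlobalMinimizer} (convexity, lower semicontinuity, and a.s.\ unique optimizer) is inherited verbatim and no new convexity burden arises. Second---the only place a genuine slip is possible---the feature count must be exactly $d(d+3)/2$: the symmetry of $A_i(x)$ is what prevents double-counting the off-diagonal monomials $\delta_j\delta_k$, and an over- or under-count here would propagate directly into the final value of $\zeta$.

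The main obstacle, strictly speaking, is not in this bookkeeping but is already discharged for us, namely the combinatorial-geometric core of Lemma~\ref{lem:convex_sep_r_sc}, proved via Radon's Theorem, which is what guarantees that a separable constraint can be supported by at most $r(m+1)$ samples irrespective of the distribution of $\delta$. Granting that lemma, the quadratic case reduces to selecting the correct monomial basis and counting it; Theorem~\ref{thm:convex_quad_r_sc} then follows by substituting the resulting bound into \eqref{eq:N_choice_zeta} through \cite[Theorem~3.3]{calafiore:10}.
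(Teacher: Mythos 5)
Your proposal is correct and follows essentially the same route as the paper: lift the quadratic dependence on $\delta$ into the monomial feature vector of dimension $d(d+3)/2$, absorb the constant term to reach the separable/multiplicative form, obtain the bound $rd(d+3)/2+r$ on the number of support constraints (the paper phrases this via Proposition~\ref{prop:sc_mults} with $m=d(d+3)/2+1$, which is exactly what Lemma~\ref{lem:convex_sep_r_sc} reduces to), and then conclude via \cite[Theorem~3.3]{calafiore:10}. The only cosmetic difference is that you apply Lemma~\ref{lem:convex_sep_r_sc} to the stacked $r$-row system directly, while the paper proves the $r=1$ case first and multiplies by $r$; the counting and the final bound are identical.
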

\else
	\begin{proposition}\label{prop:convex_quad_r_sc}
	The number of support constraints of $\textsc{SP}_\textnormal{quad}[\omega]$ in \eqref{eq:SP_quadratic_sc} is bounded as $|\textnormal{sc}(\textsc{SP}_\textnormal{quad}[\omega])| \leq rd(d+3)/2+r$.
	\end{proposition}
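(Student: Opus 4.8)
The plan is to reduce the quadratic case directly to the separable case of Lemma~\ref{lem:convex_sep_r_sc} by \emph{lifting} the quadratic dependence on $\delta$ to a linear dependence on a vector of monomials. Concretely, I would introduce the feature map $q:\Delta\to\R^m$ that collects every monomial in $\delta$ of degree one and two, namely the $d$ linear terms $\delta_k$ and the quadratic terms $\delta_k\delta_l$ with $1\le k\le l\le d$. Counting the distinct monomials gives
\[
m \;=\; d + \frac{d(d+1)}{2} \;=\; \frac{d(d+3)}{2},
\]
which is precisely the quantity that appears (through $m+1$) in the claimed bound. The reason only $d(d+1)/2$ quadratic monomials are needed, rather than $d^2$, is the assumed symmetry $A_i(x)=A_i(x)^\top$.

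The key algebraic step is to verify that each $g_i$ is affine in this lifted variable with $x$-dependent coefficients. Using symmetry, I would expand $\delta^\top A_i(x)\delta = \sum_k A_{i,kk}(x)\delta_k^2 + 2\sum_{k<l}A_{i,kl}(x)\delta_k\delta_l$, so that $\delta^\top A_i(x)\delta + b_i(x)^\top\delta$ becomes a linear combination of the $m$ entries of $q(\delta)$ whose coefficients depend only on $x$. Collecting these coefficients row by row into a matrix $G(x)\in\R^{r\times m}$ and placing the $\delta$-free terms into $H(x):=(c_1(x),\dots,c_r(x))^\top\in\R^r$, I obtain exactly the separable template of \eqref{eq:general_separable_constraints}, $g(x,\delta)=G(x)\,q(\delta)+H(x)$, with no purely-$\delta$ summand.

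It then remains to check that the hypotheses of Lemma~\ref{lem:convex_sep_r_sc} are met. Since the lifting is merely an algebraic identity and does not change the function $g_\textnormal{quad}(x,\delta)=\max_{i}g_i(x,\delta)$, convexity and lower semicontinuity of $x\mapsto g_\textnormal{quad}(x,\delta)$ are inherited from the Standing Assumption (a pointwise maximum of convex functions is convex), and measurability of $\delta\mapsto q(\delta)$ is immediate since the monomials are continuous. Applying Lemma~\ref{lem:convex_sep_r_sc} with this $G$, $q$, $H$ and $m=d(d+3)/2$ yields
\[
|\textnormal{sc}(\textsc{SP}_\textnormal{quad}[\omega])| \;\le\; r(m+1) \;=\; \frac{rd(d+3)}{2} + r
\]
almost surely, as claimed.

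I do not anticipate a genuine obstacle: the mathematical content reduces to correct bookkeeping of the monomial count — where the symmetry of $A_i(x)$ is exactly what collapses the naive $d^2$ quadratic terms down to $d(d+1)/2$ — together with the observation that the lifting alters neither the feasible set nor the optimizer, so that the notions of support constraint and Helly's dimension carry over verbatim and Lemma~\ref{lem:convex_sep_r_sc} applies without modification.
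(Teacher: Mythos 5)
Your proposal is correct and follows essentially the same route as the paper: both lift the quadratic dependence to a vector of degree-one and degree-two monomials (using the symmetry of $A_i(x)$ to count $d(d+1)/2$ quadratic terms) and then invoke the separable/multiplicative bound. The only cosmetic difference is that you place the constant terms $c_i(x)$ in $H(x)$ and apply Lemma~\ref{lem:convex_sep_r_sc} with $m=d(d+3)/2$, whereas the paper absorbs them into an appended constant entry of $q(\delta)$ and applies Proposition~\ref{prop:sc_mults} with $m=d(d+3)/2+1$; since Lemma~\ref{lem:convex_sep_r_sc} is itself proved by exactly that absorption, the two arguments coincide and yield the same bound $rd(d+3)/2+r$.
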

	Again, we immediately have the following result from \cite[Theorem 3.3]{calafiore:10}.
	\begin{theorem}\label{thm:convex_quad_r_sc}
		For all $\epsilon\in(0,1)$ and $\beta\in(0,1)$, if $N$ satisfies \eqref{eq:N_choice_zeta} with $rd(d+3)/2+r$ in place of $\zeta$, then with probability no smaller than $1-\beta$, the solution of $\textsc{SP}_\textnormal{quad}[\omega]$ in \eqref{eq:SP_quadratic_sc} is feasible for $\textsc{CCP}_\textnormal{quad}(\epsilon)$ in \eqref{eq:CCP_quadratic_sc}.
	\end{theorem}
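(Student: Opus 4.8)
The plan is to obtain the statement as an immediate corollary of Proposition \ref{prop:convex_quad_r_sc} and the scenario feasibility result \cite[Theorem 3.3]{calafiore:10}, in direct analogy with the affine case in Theorem \ref{thm:convex_affine_r_sc}. Proposition \ref{prop:convex_quad_r_sc} furnishes the bound $|\textnormal{sc}(\textsc{SP}[\omega])| \leq rd(d+3)/2 + r$ almost surely for the quadratic constraint \eqref{eq:SP_quadratic_sc}, so by the definition of Helly's dimension we have $\zeta \leq rd(d+3)/2 + r$. The key elementary observation is that the left-hand side of \eqref{eq:N_choice_zeta} is nondecreasing in the upper summation limit, so enlarging the dimension parameter only makes the condition harder to meet; hence any $N$ satisfying \eqref{eq:N_choice_zeta} with $rd(d+3)/2 + r$ in place of $\zeta$ a fortiori satisfies it with the true (and generally unknown) $\zeta$. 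Invoking \cite[Theorem 3.3]{calafiore:10} then yields that, with confidence at least $1-\beta$, the optimizer of $\textsc{SP}[\omega]$ in \eqref{eq:SP_generic} is feasible for $\textsc{CCP}(\epsilon)$ in \eqref{eq:CCP_generic}, which is the claim.

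Since this reduction is routine, the genuine content lives in Proposition \ref{prop:convex_quad_r_sc}, whose argument I sketch because it is what makes the theorem nontrivial. The idea is to \emph{lift} the quadratic dependence on $\delta$ into the separable form of Lemma \ref{lem:convex_sep_r_sc}. I would collect all non-constant monomials of degree at most two in $\delta$ into a single vector $q(\delta)$, namely the $d$ linear terms $\delta_1,\ldots,\delta_d$, the $d$ squares $\delta_1^2,\ldots,\delta_d^2$, and the $d(d-1)/2$ cross terms $\delta_j\delta_k$ with $j<k$, for a total of $m := d(d+3)/2$ components. Each constraint then reads $g_i(x,\delta) = G_i(x)\,q(\delta) + c_i(x)$, where the row $G_i(x)$ gathers the entries of $A_i(x)$ and $b_i(x)$ (with a factor of $2$ on the off-diagonal quadratic coefficients) and $c_i(x)$ plays the role of $H_i(x)$. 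Stacking over $i \in \{1,\ldots,r\}$ produces exactly the structure \eqref{eq:general_separable_constraints} with $s(\delta)=0$ and this value of $m$, whence Lemma \ref{lem:convex_sep_r_sc} gives $|\textnormal{sc}(\textsc{SP}[\omega])| \leq r(m+1) = rd(d+3)/2 + r$.

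The step deserving the most care is the monomial count. The essential fact is that the number of distinct monomials of degree at most two in $d$ variables is $\binom{d+2}{2} = (d+1)(d+2)/2$, of which exactly one is constant; the remaining $d(d+3)/2$ non-constant monomials form $q(\delta)$, while the constant is absorbed into $c_i(x) = H_i(x)$. The symmetry normalization $A_i(x)=A_i(x)^\top$ (assumed without loss of generality in the setup) is what makes the coefficient vector $G_i(x)$ unique, so that the map from $(A_i(x),b_i(x),c_i(x))$ to $(G_i(x),H_i(x))$ is well defined; the quadratic block of $q(\delta)$ has dimension $d(d+1)/2$ rather than $d^2$ precisely because $\delta_j\delta_k$ and $\delta_k\delta_j$ coincide as monomials. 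The scalarization requires no separate treatment: the vector-valued convention introduced before Lemma \ref{lem:convex_sep_r_sc} already reduces $g \leq 0$ to $\max_i g_i(x,\delta) \leq 0$, and the factor $r$ in the bound is simply the number of rows of $G(x)$ and $H(x)$. Finally, convexity of $x \mapsto g_i(x,\delta)$, measurability, and uniqueness of the optimizer are hypotheses of Standing Assumption \ref{sa:convexity_uniqueGlobalMinimizer} rather than properties to be re-established after the lifting, so they carry over unchanged; consequently the passage from Proposition \ref{prop:convex_quad_r_sc} to the theorem presents no further obstacle.
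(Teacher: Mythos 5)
Your proposal is correct and follows essentially the same route as the paper: Proposition \ref{prop:convex_quad_r_sc} is proved by lifting the quadratic dependence on $\delta$ into a vector of monomials of degree at most two (of size $d(d+3)/2$ plus the constant, using symmetry of $A_i(x)$), and the theorem then follows immediately from \cite[Theorem 3.3]{calafiore:10}. The only cosmetic difference is that you invoke Lemma \ref{lem:convex_sep_r_sc} with $m=d(d+3)/2$ while the paper appends the constant to $q(\delta)$ and invokes Proposition \ref{prop:sc_mults} with $m=d(d+3)/2+1$; these are the same argument, since the Lemma is itself derived from that Proposition by exactly this appending.
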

\fi

If $rd(d+3)/2+r$ is smaller than the s-rank, then our sample complexity improves on all known bounds.

\subsection{Box constraints}\label{sec:BoxConstraints}
Let us now study the case in which the constraint function $g(x,\delta)$ is subject to ``box constraints", i.e.\ $g(x,\delta)$ is simultaneously upper and lower bounded. Such structure typically appears in control applications where upper and lower bounds are imposed on states and inputs. It can be shown in this case that the bounds derived in the previous subsections still hold. We  show this for $g_\text{mult}(x,\delta)$.

Assuming that  $x\mapsto g(x,\delta)$ is affine almost surely and given two vectors $\underline{g},\bar{g}\in\R^r$,  consider the constraints
\begin{equation}\label{eq:SP_mult_sc_box} 
	\underline{g} \leq	g_\text{mult}(x,\delta)  \leq \bar{g},
\end{equation}
with $g_\text{mult}(\cdot,\cdot)$ as in \eqref{eq:SP_mult_sc}. Then the following holds.
\begin{proposition}\label{prop:sc_mults_box}
		Consider the constraints in \eqref{eq:SP_mult_sc_box} with $g_\textnormal{mult}(\cdot,\cdot)$ as in \eqref{eq:SP_mult_sc}. Then $|\textnormal{sc}(\textsc{SP}[\omega])| \leq rm$ almost surely.
\end{proposition}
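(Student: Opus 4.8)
The plan is to reduce the box-constrained program to the same lifted convex program that underlies Lemma~\ref{lem:convex_sep_r_sc} and Proposition~\ref{prop:sc_mults}, and to invoke the Helly's-dimension bound there. First I would justify the affineness hypothesis itself: the lower box inequality $\underline{g}\le g_\textnormal{mult}(x,\delta)$ is equivalent to $-g_\textnormal{mult}(x,\delta)+\underline{g}\le 0$, so for Standing Assumption~\ref{sa:convexity_uniqueGlobalMinimizer} to hold for \emph{both} the upper and the lower inequality the map $x\mapsto g_\textnormal{mult}(x,\delta)=G(x)q(\delta)$ must be simultaneously convex and concave in $x$, i.e.\ affine. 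This is exactly the stated assumption, and (taking $q(\delta)$ to span $\R^m$ without loss of generality) it forces each entry of $G(x)$ to be an affine function of $x$.

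Next I would introduce the auxiliary variable $Y:=G(x)\in\R^{r\times m}$, identified with a point of $\R^{rm}$. As $x$ ranges over $\mathcal{X}$, $Y$ ranges over the affine image $G(\mathcal{X})$, a convex subset of $\R^{rm}$, and for each sample the box constraint~\eqref{eq:SP_mult_sc_box} rewrites as the pair of affine (hence convex) inequalities $\underline{g}\le Y\,q(\delta^{(j)})\le\bar{g}$. The decisive observation is that both the upper and the lower inequalities are expressed through the \emph{same} matrix variable $Y$ of dimension $rm$: the lower bounds, although they double the number of scalar inequalities per sample from $r$ to $2r$, introduce \emph{no new lifted variables}. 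This is precisely why the naive route of stacking the $2r$ rows and applying Proposition~\ref{prop:sc_mults} (which would yield the weaker $2rm$) is wasteful — the stacked matrix $[\,G(x);\,-G(x)\,]$ has only $rm$ free entries.

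I would then re-run the Radon-theorem argument of Lemma~\ref{lem:convex_sep_r_sc} on this lifted program (equivalently, apply the standard bound that Helly's dimension is at most the dimension of the decision space, \cite[Lemma~2.2]{calafiore:10}). Since the lifted decision variable lives in $\R^{rm}$ and both box sides act on the common variable $Y$, the count of support \emph{samples} is governed by the dimension $rm$ rather than by the number $2r$ of constraint rows, giving $|\textnormal{sc}(\textsc{SP}[\omega])|\le rm$ almost surely, with the ``almost surely'' absorbing the measure-zero degenerate events (ties $\underline{g}_i=\bar{g}_i$, or non-uniqueness of minimizers).

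The main obstacle is the cost. Because $c^\top x$ is a function of the original variable $x$ while the constraints factor through $Y=G(x)$, and the affine lift $x\mapsto G(x)$ need not be injective, the cost is not well defined on $Y$-space; I must therefore show that the support samples of the original box-constrained program correspond to support samples of the lifted program. I would handle this exactly as in the proof of Lemma~\ref{lem:convex_sep_r_sc}, carrying $x$ along and noting that the improving-direction/Radon construction depends on $x$ only through the constraint map, which factors through $Y$, so the effective dimension controlling the number of support samples is $rm$. As a consistency check one may also use the one-sided reduction: at the optimizer, for each index $i$ and sample $j$ at most one of the two bounds can be active since $\underline{g}_i<\bar{g}_i$, so the binding constraints carry the same multiplicative structure as $g_\textnormal{mult}$ and the bound $rm$ of Proposition~\ref{prop:sc_mults} applies.
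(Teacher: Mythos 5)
Your proposal is correct and follows essentially the same route as the paper: the lifted variable $Y=G(x)\in\R^{rm}$ is exactly the set of Radon points $p_k=G(x_k^\star)$ used in the paper's proof, the observation that the lower bounds add no new lifted coordinates is the reason the bound stays at $rm$ rather than $2rm$, and your resolution of the cost issue (carrying $x$ along and using affinity so that $G(\bar{x})$ equals the Radon point, preserving \emph{both} inequalities) is precisely the paper's argument. The only cosmetic difference is that you run Radon once in $\R^{rm}$ whereas the paper argues row by row in $\R^m$ and multiplies by $r$; both yield the same bound.
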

It follows from the proof of Proposition \ref{prop:sc_mults_box} that similar results can be obtained if $g_\text{sep}(x,\delta)$, $g_\text{add}(x,\delta)$, $g_\text{aff}(x,\delta)$ and $g_\text{quad}(x,\delta)$ are box constrained, with the bounds ${r(m+1)}$, $r$, $r(d+1)$ and $rd(d+3)/2+r$, respectively.

\section{Discussion}\label{sec:sampleSize}
\ifTwoColumn
   We first comment on the connection between our bound and the s-rank. Then, we discuss and compare our bound with other bounds.
\else
	In this section, we comment on the connection between our bound and other (recently) derived bounds. Afterwards, we compare out sample size with results based on statistical learning theory.
\fi

\subsection{Connection to s-rank}\label{subsec:sRank}
In general, the sample size bounds based on the s-rank and our derived bounds are not directly comparable, since the former relies on structure in the \emph{decision space}, whereas our bounds exploit structure in the \emph{uncertainty space}.  As shown in \cite[Example 3.5]{schildbach:fagiano:morari:13}, the s-rank can be explicitly computed if $g(x,\delta)$ is affine or quadratic in $x$, whereas our bounds are useful whenever $g(x,\delta)$ is affine or quadratic in $\delta$ (Theorems \ref{thm:convex_affine_r_sc} and \ref{thm:convex_quad_r_sc}).
\ifTwoColumn
   	More specifically, the s-rank can be readily computed for constraints of the form $(x-x_c(\delta))^\top Q (x-x_c(\delta)) - r(\delta) \leq 0$, while it is not trivial to do so for $\delta^\top A(x) \delta + b(x)^\top \delta + c(x) \leq 0$, and vice versa. There are cases, however, where the s-rank and our bounds coincide. This is, for example, the case when the constraint function is bilinear in $x$ and $\delta$.
\else
	More specifically, the s-rank can be readily computed for constraints of the form $(x-x_c(\delta))^\top Q (x-x_c(\delta)) - r(\delta) \leq 0$, while it is not trivial to do so for $\delta^\top A(x) \delta + b(x)^\top \delta + c(x) \leq 0$, and vice versa. There are cases, however, where the s-rank and our bounds coincide. This is, for example, the case when the constraint function is bilinear in $x$ and $\delta$.
\fi
In practice, the minimum of the s-rank and our bound should be taken to obtain a tighter bound on Helly's dimension. We will come back to this in Section  \ref{sec:Examples}.

\subsection{Comparison with Statistical Learning Theory}
\ifTwoColumn
   	A different randomized method for approximating $\text{CCP}(\epsilon)$ is based on statistical learning theory \cite{tempo:calafiore:dabbene, anthony:biggs, vidyasagar:01}. It is applicable for constraint functions of the form \eqref{eq:SP_affine_sc} and \eqref{eq:SP_quadratic_sc}, in which case the sample sizes scale as $\mathcal{O}\left(\frac{\xi^\text{vc}}{\epsilon}\ln\frac{1}{\epsilon}\right)$, where $\xi^\text{vc}$ is a bound on the VC-dimension\footnote{Recall that the sample size required by the scenario approach scales as $\mathcal{O}(\zeta/\epsilon)$.}. Specifically, known bounds for the VC-dimension are $\xi^\text{vc}_\text{aff} = 2r\log_2(er)(d+1)$ and $\xi^\text{vc}_\text{quad} = 2r\log_2(er)(d(d+3)/2+1)$ \cite{anthony:biggs}. From Propositions \ref{prop:convex_affine_r_sc} and \ref{prop:convex_quad_r_sc} it follows that $\zeta_\text{aff} < \xi^\text{vc}_\text{aff}$ and $\zeta_\text{quad} < \xi^\text{vc}_\text{quad}$ for any $r$ and $d$. Therefore, we  conclude that  our proposed sample sizes are lower than those based on statistical learning theory for these two cases.
	
	This is not surprising, since statistical learning theory can be applied to any (non-convex) optimization program with finite VC-dimension, whereas the scenario approach is typically restricted to random convex programs. However, we remark here that recent results related to scenario-based optimization have extended the standard (convex) scenario approach to certain classes of random non-convex programs, see e.g.\ \cite{grammatico:zhang:margellos:goulart:lygeros:15, grammatico:zhang:margellos:goulart:lygeros:acc14, calafiore:lyons:fagiano:12, mohajerin:sutter:lygeros:14} for theoretical results and \cite{zhang:grammatico:margellos:goulart:lygeros:14ifac} for an application towards Randomized Nonlinear MPC.
\else
	A different randomized method to solve $\text{CCP}_\text{aff}(\epsilon)$ and $\text{CCP}_\text{quad}(\epsilon)$ without making any assumption on the underlying probability distribution is based on statistical learning theory \cite{vidyasagar:01}, where the sample size scales with complexity $\mathcal{O}\left(\frac{1}{\epsilon^2}\right)$. Less conservative bounds of complexity of the kind $\mathcal{O}\left(\frac{1}{\epsilon}\ln\frac{1}{\epsilon}\right)$ can be obtained by considering the so-called ``one-sided probability of failure'' \cite{anthony:biggs,alamo:tempo:camacho:09}. The sample size of the latter scales as
\begin{equation}\label{eq:N_VC}
	N_\text{VC} \geq \frac{4}{\epsilon} \left( \xi_\text{VC} \log_2\left(\frac{12}{\epsilon}\right) + \log_2\left(\frac{2}{\beta}\right) \right),
\end{equation}
where $\xi_\text{VC}$ is a bound on the VC-dimension. More specifically, in case of $\text{CCP}_\text{aff}$ in \eqref{eq:CCP_affine_sc} and $\text{CCP}_\text{quad}$ in \eqref{eq:CCP_quadratic_sc}, known bounds for the VC-dimension are $\xi_\text{VC,aff} = 2r\log_2(er)(d+1)$ and $\xi_\text{VC,quad} = 2r\log_2(er)(d(d+3)/2+1)$, respectively \cite{anthony:biggs}. Comparing these bounds with our Proposition \ref{prop:convex_affine_r_sc} and Proposition \ref{prop:convex_quad_r_sc} it follows that $\zeta_\text{aff} < \xi_\text{VC,aff}$ and $\zeta_\text{quad} < \xi_\text{VC,quad}$ for any $r$ and $d$. Moreover, since bound \eqref{eq:N_expl_cal} is lower than \eqref{eq:N_VC} even for $\zeta=\xi_\text{VC}$, the required sample size based on our bound is always lower than the one based on statistical learning theory. This is not surprising, since statistical learning theory can be applied to any (non-convex) optimization program with finite VC-dimension, whereas the scenario approach is typically restricted to random convex programs. However, we remark here that recent results related to scenario-based optimization have extended the standard (convex) scenario approach to certain classes of random non-convex programs, see e.g.\ \cite{grammatico:zhang:margellos:goulart:lygeros:14, grammatico:zhang:margellos:goulart:lygeros:acc14,calafiore:lyons:fagiano:12, mohajerin:sutter:lygeros:14} for theoretical results and \cite{zhang:grammatico:margellos:goulart:lygeros:14ifac} for an application towards randomized nonlinear MPC.
\fi

\subsection{Connection to other results}\label{sec:connectionOtherResults}
Scenario programs subject to constraints of form \eqref{eq:SP_affine_sc} were recently studied in \cite{zhang:grammatico:schildbach:goulart:lygeros:14ecc} in the context of Randomized Model Predictive Control (RMPC). Under the (more restrictive) assumptions of an absolutely continuous probability distribution and that $G(x^\star)\neq0$, it was shown that $|\textnormal{sc}(\textsc{SP}[\omega])| \leq rd$, compared to $r(d+1)$ in Proposition \ref{prop:convex_affine_r_sc}. 
\ifTwoColumn
   	The result presented in this paper, however, applies to a wider range of problems, without any assumption on the underlying probability distribution, at the cost of a slight increase in the upper bound for Helly's dimension.
\else
	The result presented in this paper, however, applies to a wider range of problems, without any assumption on the underlying probability distribution, at the cost of a slight increase in the upper bound for Helly's dimension.
\fi

An alternative approach to approximate CCP$(\epsilon)$ was proposed in \cite{Margellos2014}, and  studied in \cite{zhang:margellos:goulart:lygeros:13, zhang:georghiou:lygeros:15} in the context of  Stochastic MPC. The method is based on a combination of scenario and robust optimization. Similar to our results here, the sample sizes in \cite{Margellos2014} depend on the dimension of the uncertainty space. However, since the method requires the solution of a robust problem, where the constraints need to be satisfied robustly for all uncertainty realizations inside a predefined set, the solutions can become substantially more conservative than the standard scenario approach, see  e.g.\ \cite[Section V.C]{Margellos2014} and \cite[Section V.B]{zhang:margellos:goulart:lygeros:13}. Nevertheless, recent results suggest that some conservatism can be overcome by using non-linear policies \cite{zhang:georghiou:lygeros:15}. This issue is subject to future research.

\section{Randomized Model Predictive Control of Chance Constrained Systems}\label{sec:Examples}
In this section, we apply our new bounds to Randomized Model Predictive Control (RMPC) as a method for approximating chance constrained Stochastic MPC problems \cite{CalafioreFagTAC2013, SchildbachAutomatica2014, zhang:grammatico:schildbach:goulart:lygeros:14ecc}. 
We consider linear systems of the form 
$${\textstyle x_{k+1} = Ax_k + Bu_k + E\delta_k,}$$
where $k\in\{0,\ldots,T-1\}$ is the time index, $T$ the prediction horizon, $x_k\in\mathbb{R}^{n_x}$ is the state, $u_k\in\mathbb{R}^{n_u}$ is the input, and $\delta_k\in\Delta\subset\R^{n_\delta}$ is an uncertain disturbance. For given matrices $G\in\R^{n_g\times n_u}$, $g\in\R^{n_g}$, $F\in\R^{n_f\times n_x}$ and $f\in\R^{n_f}$, we consider state and input constraints of the form $u_k\in\mathbb{U}:=\{u\in\R^{n_u}:Gu\leq g\}$ and $x_k\in\mathbb{X}:=\{x\in\mathbb{R}^{n_x}:Fx\leq f\}$, where $n_f$ ($n_g$) is the number of state (input) constraints. We require  the state constraints to be satisfied with probability at least $1-\epsilon$. To account for the uncertainties entering the system, we model the inputs using affine decision rules \cite{ben2004adjustable, Goulart2006}:
\begin{equation}\label{eq:input_k}
	u_k(\bm\delta) := h_k + \sum_{j=0}^{k-1} M_{k,j}\delta_j,
\end{equation}
where $h_k\in\R^{n_u}$ and $M_{k,j}\in\R^{n_u\times n_\delta}$ are decision variables, $\bm\delta:=[\delta_0,\ldots,\delta_{T-1}]$, and the (empty) sum is defined as $\sum_{j=0}^{-1}(\cdot):=0$. Note that, to ensure causality, the input at time $k$ depends only on the past disturbances $\delta_0,\ldots,\delta_{k-1}$. We assume for \eqref{eq:input_k} that $\bm\delta$ can be measured directly. As the control objective, any convex cost function $J(\mathbf{u})$ can be chosen, where $\mathbf{u}:=[u_0,\ldots,u_{T-1}]$.
If $x_0$ is the initial state, $\mathbf{h}:=[h_0,\ldots,h_{T-1}]$ and $\mathbf{M}\in\R^{Tn_u\times Tn_\delta}$ is the collection of all $\{M_{k,j}\}_{k,j}$, then the predicted state at stage $k$ is
\begin{equation}\label{eq:state_k}
	x_k(\bm\delta) = A^k x_0 + \mathbf{B}_k \mathbf{h} + (\mathbf{B}_k\mathbf{M}+\mathbf{E}_k)\bm\delta
\end{equation}
for suitable matrices $\mathbf{B}_k\in\R^{n_x \times Tn_u}$ and $\mathbf{E}_k\in\R^{n_x \times Tn_\delta}$.
The chance constrained MPC problem is now given as
\begin{align}\label{eq:SMPC_formulation}
	\min_{\mathbf{M},\mathbf{h}} & \quad  J(\mathbf{M},\mathbf{h}) \\
	\text{s.t.} & \quad u_k(\bm\delta) \in\mathbb{U} \qquad \forall \bm\delta\in\Delta^{T},\ \forall k\in\{0,\ldots,T-1\}, \nonumber \\
			& \quad \bbP \left[ x_{k}(\bm\delta) \in\mathbb{X} \right] \geq 1-\epsilon \quad\ \forall k\in\{1,\ldots,T\},  \nonumber
\end{align}
with $u_k(\bm\delta)$ as in \eqref{eq:input_k} and $x_k(\bm\delta)$ as in \eqref{eq:state_k}. Note that we require the input constraints in \eqref{eq:SMPC_formulation} to be satisfied robustly for every uncertainty realization since they are typically inflexible. It is shown in \cite{ben2004adjustable, Goulart2006} that if $\Delta$ is a  polytope, then the robust input constraints admit an exact reformulation as a set of linear constraints. 

The RMPC counterpart of \eqref{eq:SMPC_formulation} is obtained by replacing the chance constraints with sampled constraints as described next: for each stage $k$, we extract the multi-sample $\bm{\omega}_k := \{\bm{\delta}^{(1)},\ldots,\bm{\delta}^{(N_k)}\}$.  Then the $k$th chance constraint is replaced with $N_k$ sampled constraints, each corresponding to a sample $\bm{\delta}^{(i)}\in\bm\omega_k$. Clearly, the sample sizes $N_k$ depend on the Helly's dimensions of each stage, denoted by $\zeta_k$. 

\subsection{Comparison of Bounds on Helly's Dimension}\label{sec:comparisonBoundsGeneral}
As reported in \cite[Section III]{zhang:margellos:goulart:lygeros:13}, the \textit{standard bound} for $\zeta_k$ is given by $\zeta_k^\text{std} := kn_u + n_u n_\delta \frac{k(k-1)}{2}$, whereas the \textit{s-rank} gives the bound $\zeta_k^\text{s-r} := \min\left\{\text{rank}(F), kn_u\right\} + n_u n_\delta \frac{k(k-1)}{2}$ \cite[Proposition 1]{zhang:grammatico:schildbach:goulart:lygeros:14ecc}. Clearly, both bounds scale quadratically in the number of stages as $\mathcal{O}(k^2 n_u n_\delta)$. This renders the application of RMPC numerically challenging due to the large number of sampled constraints that need to be stored and processed when solving the problem. For example, \cite{zhang:margellos:goulart:lygeros:13} reports that when using these bounds, only small problems ($n_u=n_\delta=1$, $T\leq20$, $\epsilon=0.1$) can be solved efficiently.

In contrast, our new bound based on Proposition \ref{prop:convex_affine_r_sc} suggests that $\zeta_k\leq\zeta_k^\text{new} := n_f(kn_\delta+1)$ for the general case, and $\zeta_k\leq\zeta_k^\text{new} := \frac{n_f}{2}(kn_\delta+1)$ if the state constraints are just upper and lower bounded (Proposition \ref{prop:sc_mults_box}). Note that our new bound $\zeta_k^\text{new}$ scales \emph{linearly} in the number of stages $k$, and can therefore be applied to problems with a long horizon. Moreover, our bound does not depend on the number of inputs, but rather on the number of state constraints.  
The second column in Table~\ref{tab:sampleSizes} summarizes the different bounds.

\begin{table}[htb]
\tabcolsep = 1.0mm
\caption{Comparison of bounds on $\zeta_k$ for general chance constrained MPC of form \eqref{eq:SMPC_formulation} and the inventory control example in Section \ref{sec:InventoryExample}, where $\alpha := \min\left\{  \text{rank}(F), kn_u  \right\}.$}
\label{tab:sampleSizes}
\begin{center}
\begin{tabular}{c||c |c }
\toprule
Bound on $\zeta_k$ & General MPC  &  Inventory Control  \\
 \midrule
standard  \cite{campi:garatti:08} &  $kn_u + n_u n_\delta \frac{k(k-1)}{2}$ & $\mathcal{O}(k^2 n_u)$  \\
 s-rank \cite{schildbach:fagiano:morari:13} & $\alpha + n_u n_\delta \frac{k(k-1)}{2}$ & $\mathcal{O}(k^2 n_u)$   \\
Proposition \ref{prop:convex_affine_r_sc} & $n_f(kn_\delta+1)$  & $\mathcal{O}(k)$ \\
\bottomrule
\end{tabular}
\end{center} 
\end{table}

\subsection{Inventory Control Example}\label{sec:InventoryExample}
\ifTwoColumn
	As an example for \eqref{eq:SMPC_formulation}, let us now consider an inventory management problem for a warehouse supplied by $n_u$ factories. The inventory dynamics are $x_{k+1} = x_k + \mathbf{1}^\top u_k - v_k - \delta_k$, where $x_k\in\R$ is the inventory level, $u_k\in\R^{n_u}$ the factory outputs, $v_k\in\R$ the nominal demand, $\delta_k\in\R$ the demand uncertainty, and $\mathbf{1}\in\R^{n_u}$ the all-one vector. 
\else
	As a concrete example, let us consider the inventory management problem of \cite{ben2004adjustable}. The model consists of a warehouse, supplied by $m$ factories, all of which produce the same good. The goal is to satisfy an uncertain demand while minimizing the average production cost of the $m$ factories and storage cost of the warehouse over a given horizon of $T$ semimonthly periods. Let $x\in\mathbb{R}$ be the state, representing the inventory level of the warehouse. Its dynamics are given by
	\begin{equation*}
		x_{k+1} = x_k + \mathbf{1}^\top u_k - v_k - \delta_k,
	\end{equation*}
	where $v_k\in\R$ is a nominal demand, $\delta_k\in\R$ the demand uncertainty, $u_k\in\R^{n_u}$ the production level of the factories, $\mathbf{1}\in\R^{n_u}$ the all-one vector. 
\fi
Following \cite{ben2004adjustable}, we let $v_k = 300\left( 1 + 0.5\sin\left( \pi k/12 \right) \right)$. The demand uncertainties $\delta_k$ are assumed to be i.i.d.\ random variables uniformly distributed on $\Delta=[-200,200]$. We consider state and input constraints of the form $\bbP[500 \leq x_k ] \geq 1-\epsilon$ and $0\leq u_k \leq 567$, with $u_k$ as in \eqref{eq:input_k}, and minimize the cost $J(\mathbf{u}) = \sum_{k=0}^{T-1} \mathbb{E}\left[ 100\, x_k + k \mathbf{1}^\top u_k \right] + \mathbb{E}[100\, x_T]$.	
		
\subsubsection{Bounding Helly's Dimension}
Figure \ref{fig:sdimension} depicts the bounds on the number of support constraints for the s-rank and the new bound\footnote{The standard bound is not plotted since it is never better than the s-rank, see also Table \ref{tab:sampleSizes}.}  as a function of $n_u$ and $k$. It illustrates well the linear and quadratic growth of the s-rank with respect to the number of inputs and stages, respectively. In contrast, the new bound scales moderately along the stages, and is entirely independent of the number of inputs. Figure \ref{fig:sdimension} also demonstrates that while the s-rank provides good bounds for small values of $k$ and $n_u$, it is outperformed by our new bound when these values become large. This asymptotic behavior is summarized in the third column of Table  \ref{tab:sampleSizes}. This observation is indeed a typical behavior of these two bounds, and hold generally for RMPC problems of this kind.

\ifTwoColumn
	
	\begin{figure}[thb]
	\centering
	\includegraphics[trim = 0mm 0mm 0mm 0mm, clip, width=0.5\textwidth]{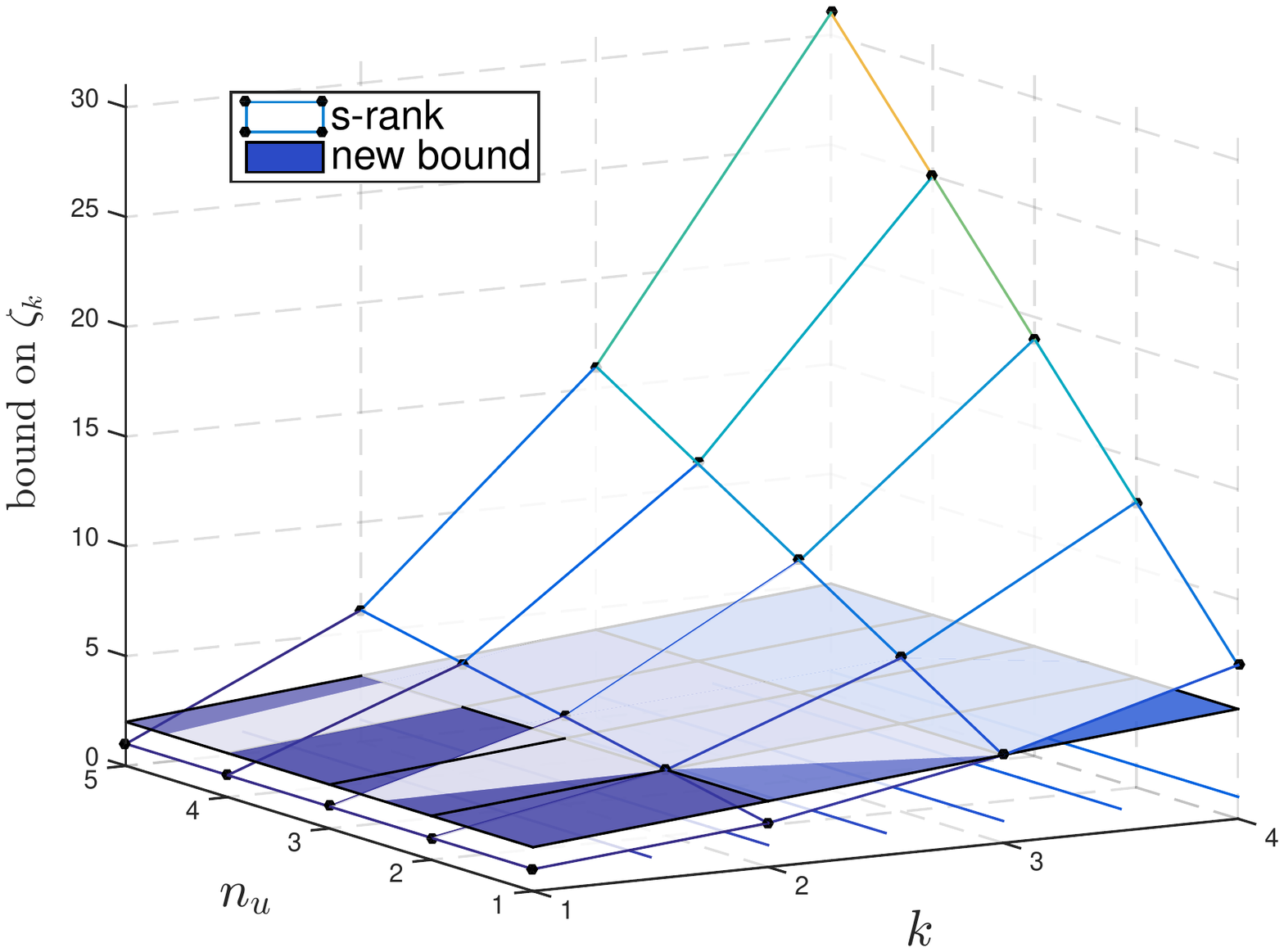} \\
	\caption{Bound on Helly's dimension using the s-rank and new bound (Proposition \ref{prop:convex_affine_r_sc}) for $k\in\{1,\ldots,4\}$ and $n_u\in\{1,\ldots,5\}$.}
	\label{fig:sdimension}
	\end{figure}
\else
	The difference in the sample size also influences the required memory and computation time when solving the sampled problem. Figure \ref{fig:memory_4fact} depicts an estimate of the memory required to formulate the sampled problem for different horizon lengths and different number of actuators. We observe that for $N=70$ and $m=10$, the memory required by the s-rank is 1.3 TB. On the other hand, the proposed bound required only 11 GB of memory for the same problem. 
	
	\begin{figure}[!ht]
	\centering
	\includegraphics[trim = 0mm 0mm 0mm 0mm, clip, width=0.45\textwidth]{Figures/sDimension_1constraint} \\
	\caption{Bound on the Helly's dimension using the s-rank and new bound for the $k$th stage.}
	\label{fig:sdimension}
	\end{figure}
	
	\begin{figure}[thb]
	\centering
	\includegraphics[trim = 8mm 5mm 5mm 5mm, clip, width=0.45\textwidth]{Figures/Helly_k} \\
	\caption{Bound on Helly's dimension using the s-rank and new bound for the $k$th stage.}
	\label{fig:sdimension}
	\end{figure}
	
	\begin{figure}[!ht]
	\centering
	\includegraphics[width=0.45\textwidth]{Figures/memory_1constraint}\\
	\includegraphics[width=0.4\textwidth]{Figures/memory_4fact_1constraint}
	\caption{Required memory allocation for different prediction horizon lengths using the s-rank and new bound (top plot). The case of $m=4$ factories is shown on the bottom plot.}
	\label{fig:memory_4fact}
	\end{figure}
\fi

\subsubsection{Empirical Confidence and Violation Probability}
For $n_u=5$ and $T=15$, Figure \ref{fig:beta} depicts empirical estimates of the confidence $\hat\beta_k$ along the stages for both the s-rank and the new bound, while Figure \ref{fig:epsilon} shows the empirical violation probability $\hat\epsilon_k$. Note that, for reasons of computational tractability, different values $\epsilon$ and $\beta$ were used in the two figures. The sample sizes $N_k$ used to generate both plots were acquired by numerically inverting \eqref{eq:N_choice_zeta} with $\zeta_k^\text{new}$ and $\zeta_k^\text{s-r}$ in place of $\zeta$, respectively.

Figures \ref{fig:beta} and \ref{fig:epsilon} show that the new bound is closer to the predefined confidence and violation probability levels (red) than the s-rank whenever $k\geq2$, but that the s-rank outperforms the new bound only for $k=1$.  This is not surprising, since for $k=1$, the s-rank provides a tighter bound on $\zeta_1$ than the new bound does, see also Figure \ref{fig:sdimension} and Table  \ref{tab:sampleSizes}. From these figures we conclude that for most stages, the new bound is less conservative than the s-rank, and therefore also the standard bound.

	\begin{figure}[!t]
	\centering
	\includegraphics[trim = 7mm 5mm 5mm 5mm, clip, width=0.5\textwidth]{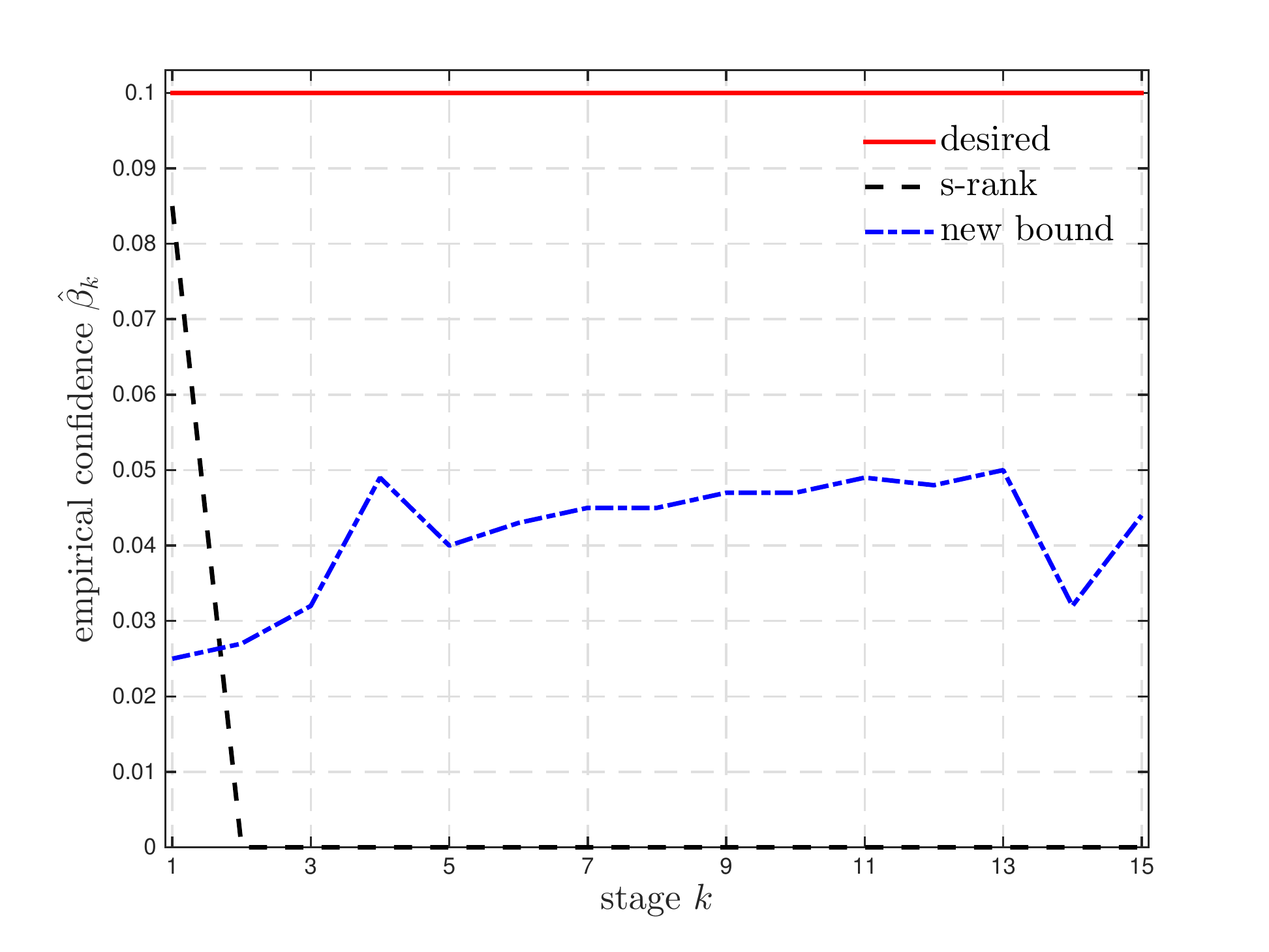} \\
	\caption{Predefined $\beta$ (red), its empirical estimate using the s-rank (black) and the new bound (blue) for $n_u=5$, $\epsilon=0.2$ and $\beta=0.1$. The estimates are obtained from $10^4$  instances, where for each instance the empirical violation probabilities are estimated by evaluating the solutions of each RMPC problem against $10^4$ uncertainty realizations. Then, $\hat\beta$ is determined as the fraction $\hat\epsilon$ which were higher than the predetermined level of $\epsilon=0.2$.}
	\label{fig:beta}
	\end{figure}

	\begin{figure}[!t]
	\centering
	\includegraphics[trim = 8mm 5mm 5mm 5mm, clip, width=0.5\textwidth]{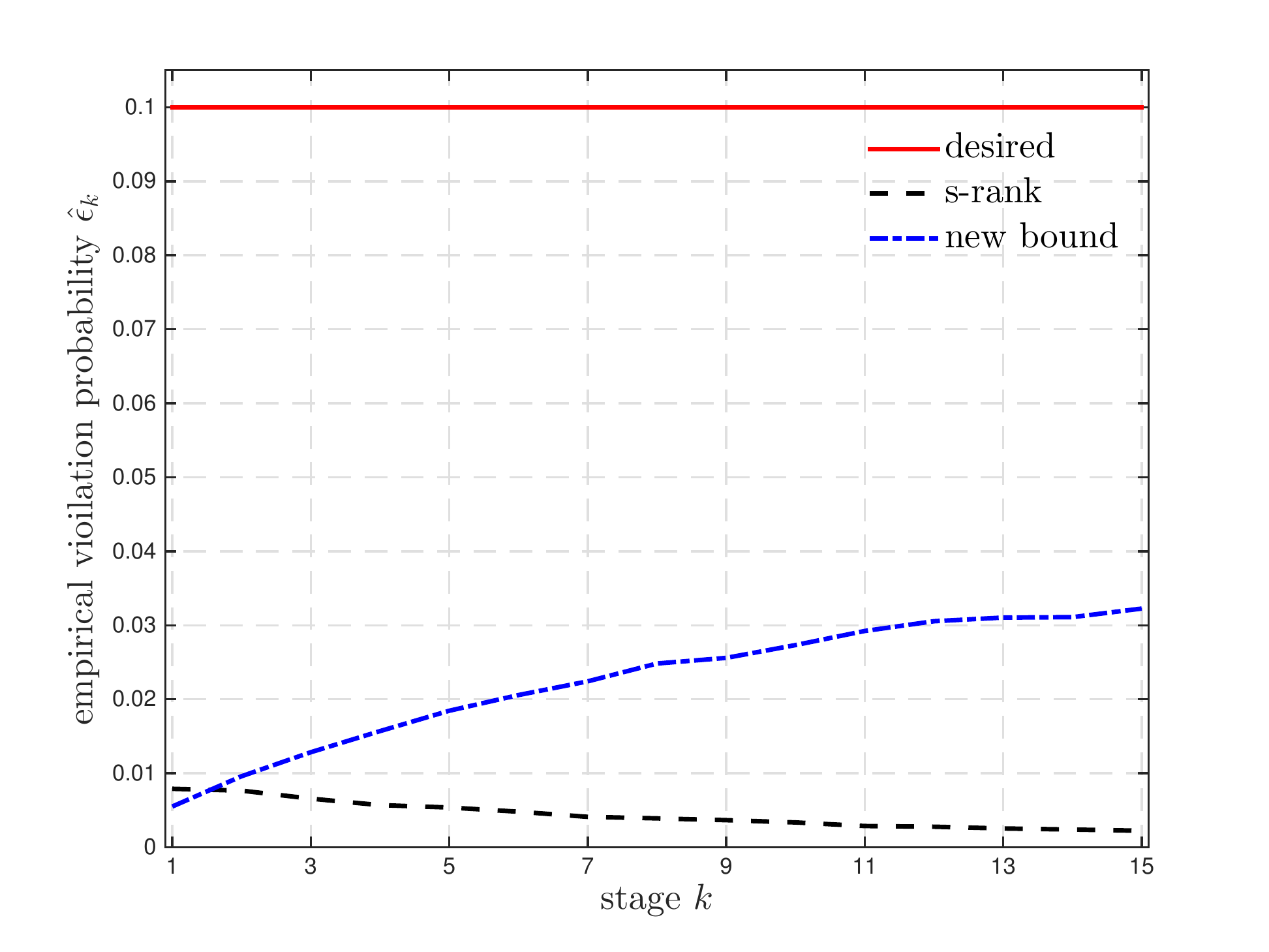} \\
	\caption{Predefined $\epsilon$ (red), its empirical estimate using the s-rank (black) and the new bound (blue) for $n_u=5$, $\epsilon=0.1$ and $\beta=10^{-7}$. The estimates are obtained by averaging over the empirical violation probabilities of $10^3$  instances, each estimated by evaluating its solution against $10^4$ uncertainty realizations. }
	\label{fig:epsilon}
	\end{figure}

\subsubsection{Tightness of the Bounds}
Even if the new bound improves upon the existing ones, Figures \ref{fig:beta} and \ref{fig:epsilon} suggest they are still conservative. Indeed, the empirical violation probabilities $\hat\epsilon_k$ can be improved using a so-called sampling-and-discarding method, in which a larger number of samples is drawn, but some of them are discarded a posteriori \cite{campi:garatti:11, calafiore:10}. In general, this method not only improves the objective value, but also brings the empirical violation probability closer to the predefined level. 
Furthermore, we can improve the empirical confidence $\hat\beta_k$ using the bound $\tilde\zeta_k^\text{new}=k$ instead of $\zeta_k^\text{new}=k+1$ from \cite[Proposition 2]{zhang:grammatico:schildbach:goulart:lygeros:14ecc}, since both assumptions required there (absolutely continuous distribution and $G(x^\star)\neq0$) are satisfied here. 

\section{Conclusion}\label{sec:conclusion}
We presented a new upper bound on Helly's dimension for random convex programs in which the constraint functions can be separated between the decision and uncertainty variables. As a consequence, the number of scenarios can be reduced significantly for problems where the dimension of the uncertain variable is smaller than the dimension of the decision variable. This leads to less conservative solutions, both in terms of cost and violation probability, as well as a reduction in the computational complexity of solutions based on the scenario approach. We applied our bounds to Stochastic MPC problems for chance constrained systems, and demonstrated  the quality of the bound on an inventory example. We believe that both theoretical and applied research in the field of Randomized MPC for uncertain linear systems can benefit from our results.

\section*{Acknowledgments}
The authors thank Dr.\ Kostas Margellos and Dr. Angelos Georghiou for fruitful discussions on related topics.  
Research was partially funded by the Swiss Nano-Tera.ch under the project HeatReserves and the European Commission under project SPEEDD (FP7-ICT 619435).

\begin{appendix}

\section{Proofs}\label{app: proofs}
For ease of presentation we first show the proof of Proposition \ref{prop:sc_mults} and then the proof of Lemma \ref{lem:convex_sep_r_sc}.

\subsubsection*{Proof of Proposition \ref{prop:sc_mults}}\label{sec:cor_mults}
Let us first prove the following auxiliary statement.
\begin{claim*}
	If $r=1$, then $|\textnormal{sc}(\textnormal{SP}[\omega])| \leq m$.
\end{claim*}
\begin{proof}
Consider $\text{SP}[\omega]$ subject to $N\geq m+1$ sampled constraints, generated by $\omega = \{\delta^{(1)},\ldots,\delta^{(N)}\}$. Suppose, for the sake of contradiction, that there exists $m+1$ support constraints. Without loss of generality, they are generated by the first $m+1$ samples, i.e.\ $\text{sc}(\text{SP}[\omega])=\{\delta^{(1)},\ldots,\delta^{(m+1)}\}$. Let $x^\star$ be the optimal solution of $\text{SP}[\omega]$ and 
$x_i^\star$ be the optimal solution of $\text{SP}[\omega\setminus\{\delta^{(i)}\}]$, so that $c^\top x^\star_i < c^\top x^\star$. If we define $p_0:=G(x^\star),\ p_1 := G(x^\star_1),\ \ldots,\ p_{m+1} := G(x^\star_{m+1})$, then $\mathcal{P}:=\big\{p_0,p_1,p_2,\ldots,p_{m+1}\big\}$ is a collection of $m+2$ vectors in $\R^{m}$. By Radon's Theorem, there exist two index sets $K,L\subseteq\{0,\ldots,m+1\},\ K\cup L=\{0,\ldots,m+1\},\ K\cap L=\varnothing$, such that $\mathcal{P}$ can be partitioned into two disjoint sets $\mathcal{P}_K := \{p_k \mid k\in K\}$ and $\mathcal{P}_L := \{p_l \mid l\in L\}$, with $\text{conv}(\mathcal{P}_K) \cap \text{conv}(\mathcal{P}_L)\neq\varnothing$. Therefore there exists $\bar{p}\in\text{conv}(\mathcal{P}_K) \cap \text{conv}(\mathcal{P}_L) \subseteq \R^m$. 
Because $\bar{p}\in\text{conv}(\mathcal{P}_K)$, there exist non-negative scalars $\{\bar\kappa_k\}_{k\in K}$, $\sum_{k\in K}\bar\kappa_k = 1$, such that $\bar{p} = \sum_{k\in K} \bar\kappa_k p_k$. Moreover, for all $k\in K$ it holds that $p_k^\top q(\delta^{(l)}) + s(\delta^{(l)})\leq0$ $\forall l\in L\setminus\{0\}$. Therefore, since $\bar{p}\in\text{conv}(\mathcal{P}_K)$ and $\sum_{k\in K}\bar\kappa_k = 1$, it holds $\bar{p}^\top q(\delta^{(l)}) + s(\delta^{(l)})\leq0$ $\forall l\in L\setminus\{0\}$. Since $\bar{p}$ also belongs to $\text{conv}(\mathcal{P}_L)$, we can analogously conclude that $\bar{p}^\top q(\delta^{(k)}) + s(\delta^{(k)})\leq0$ $\forall k\in K\setminus\{0\}$. Hence, since $K\cup L \supseteq \{1,\ldots,m+1\}$, it holds that $\bar{p}^\top q(\delta^{(i)}) + s(\delta^{(i)})\leq0$ $\forall i\in \{1,\ldots,m+1\}$. 
\ifTwoColumn
   	{}
\else
	Note that $\bar{p}^\top q(\delta^{(i)}) + s(\delta^{(i)})\leq0$ $\forall i\in \{m+2,\ldots,N\}$ is satisfied automatically by convexity of $g(\cdot,\delta)$. 
\fi
Without loss of generality, we now assume that $0\not\in K$, i.e.\ $G(x^\star) \not\in \mathcal{P}_K$ and let $\bar{x} := \sum_{k\in K} \bar\kappa_k x_k^\star$. 

From the convexity of $g(\cdot,\delta)$, for every $i\in\{1,\ldots,N\}$ it holds that $g(\bar{x},\delta^{(i)})\leq\sum_{k\in K}\bar\kappa_k g(x^\star_k,\delta^{(i)})=\sum_{k\in K}\bar\kappa_k \left[ p_k^\top q(\delta^{(i)}) + s(\delta^{(i)}) \right]=\bar{p}^\top q(\delta^{(i)}) + s(\delta^{(i)}) \leq 0$, i.e., $\bar{x}$ is a feasible point of $\text{SP}[\omega]$. Since $c^\top x_i^\star < c^\top x^\star$ $\forall i\in\{1,\ldots,m+1\}$, we have $c^\top\bar{x} < c^\top x^\star$. This, however, contradicts the fact that $x^\star$ is the optimizer of $\text{SP}[\omega]$. 
\end{proof}
Let now $r\geq1$. From the above claim it holds that the number of support constraint for each row is at most $m$. Therefore, with $rm$ constraints, there are at most $rm$ support constraints. This concludes the proof. \qed

\subsection*{Proof of Lemma \ref{lem:convex_sep_r_sc}}
The constraint $G(x)\,q(\delta) + H(x) + s(\delta) \leq 0$ can be expressed as $g(x,\delta)=\tilde{G}(x)\, \tilde{q}(\delta)+s(\delta)\leq0$ by choosing $\tilde{G}(x):=[G(x) \ H(x) ]$ and $\tilde{q}(\delta):=[q(\delta) ; \mathbf{1}]\in\R^{m+1}$. Invoking Proposition \ref{prop:sc_mults}, we immediately get at most $r(m+1)$ support constraints. \qed

\ifTwoColumn
	\subsection*{Proof of Proposition \ref{prop:convex_quad_r_sc}}
	We proceed in two steps and first prove the  statement.
	\begin{claim*}
	If $r=1$, then $|\textnormal{sc}(\textsc{SP}[\omega])| \leq d(d+3)/2+1$.
	\end{claim*}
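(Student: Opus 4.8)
The plan is to mimic the proof of the $r=1$ case of Proposition \ref{prop:sc_mults} almost verbatim, but applied to a suitable linearization of the quadratic constraint. For a single quadratic constraint $g_1(x,\delta)=\delta^\top A_1(x)\delta + b_1(x)^\top\delta + c_1(x)$, the key observation is that the dependence on $\delta$ is linear once we pass to the lifted monomial variable $\phi(\delta):=(\operatorname{vech}(\delta\delta^\top);\,\delta;\,1)$ collecting all degree-$\leq 2$ monomials in $\delta$. Since $A_1(x)$ is symmetric, the quadratic form $\delta^\top A_1(x)\delta$ is determined by the $d(d+1)/2$ entries of $A_1(x)$ on and above the diagonal; together with the $d$ coefficients of $b_1(x)$ and the single constant $c_1(x)$, the constraint reads $g_1(x,\delta)=P(x)^\top\phi(\delta)$, where $P(x)\in\R^{M}$ collects these coefficients and $M=d(d+1)/2+d+1=d(d+3)/2+1$.

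First I would make this lifting precise and note that the constraint is now \emph{multiplicative} in the new sense: $g_1(x,\delta)=P(x)^\top\phi(\delta)$ with $P:\R^n\to\R^M$ and $\phi:\Delta\to\R^M$. This is exactly the structure handled by the $r=1$ claim in the proof of Proposition \ref{prop:sc_mults}, with $m$ replaced by $M$ and with $G(x)$ replaced by the row vector $P(x)^\top$. I would then replay that argument: assume for contradiction that there are $M+1$ support constraints, generated (without loss of generality) by the first $M+1$ samples; let $x^\star$ be the optimizer of $\textsc{SP}[\omega]$ and $x_i^\star$ the optimizer with the $i$th sample removed, so $c^\top x_i^\star<c^\top x^\star$; set $p_0:=P(x^\star)$ and $p_i:=P(x_i^\star)$ for $i\in\{1,\ldots,M+1\}$, giving a collection of $M+2$ vectors in $\R^M$. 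By Radon's Theorem this collection splits into two sets whose convex hulls intersect at some $\bar p$, and taking the matching convex combination $\bar x$ of the corresponding minimizers yields, via convexity of $x\mapsto g_1(x,\delta)$ and the identity $g_1(x,\delta)=P(x)^\top\phi(\delta)$, a feasible point with strictly smaller cost, contradicting optimality of $x^\star$.

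The main obstacle, and the point I would check most carefully, is the feasibility step $g_1(\bar x,\delta^{(i)})\leq \sum_{k\in K}\bar\kappa_k\,g_1(x_k^\star,\delta^{(i)})=\bar p^\top\phi(\delta^{(i)})\leq 0$. The first inequality needs convexity of $g_1(\cdot,\delta)$ in $x$, which holds under Standing Assumption \ref{sa:convexity_uniqueGlobalMinimizer}; crucially, convexity in $x$ does \emph{not} require $A_i(x)$ to have any definiteness in $\delta$, so the lifting argument is legitimate precisely because it only exploits linearity in the lifted variable $\phi(\delta)$, not convexity in $\delta$. The middle equality is where the lifting pays off: $\sum_{k\in K}\bar\kappa_k P(x_k^\star)^\top\phi(\delta^{(i)})=\bar p^\top\phi(\delta^{(i)})$ by linearity in $P$. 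One subtlety to state explicitly is that the same $\bar p$ must certify feasibility for \emph{all} constraining samples simultaneously; this is handled exactly as in the multiplicative case, using that $\bar p$ lies in both convex hulls so it inherits the sign conditions $\bar p^\top\phi(\delta^{(l)})\leq 0$ from both index sets, and the remaining samples $i\in\{M+2,\ldots,N\}$ are satisfied automatically.

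Finally, once the $r=1$ claim gives at most $d(d+3)/2+1$ support constraints per row, the general $r\geq 1$ case follows by the same row-counting argument used at the end of the proof of Proposition \ref{prop:sc_mults}: the scalarized constraint $g_\text{quad}=\max_i g_i$ has support constraints that must be support constraints of one of the $r$ individual quadratic rows, so $|\textnormal{sc}(\textsc{SP}[\omega])|\leq r\big(d(d+3)/2+1\big)=rd(d+3)/2+r$, which is the claimed bound.
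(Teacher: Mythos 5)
Your proposal is correct and follows essentially the same route as the paper: lift the quadratic constraint to the degree-$\leq 2$ monomial vector of dimension $d(d+1)/2+d+1=d(d+3)/2+1$, observe that the constraint becomes purely multiplicative in the lifted uncertainty, and apply the bound of Proposition \ref{prop:sc_mults} with $m=d(d+3)/2+1$ (the paper simply cites that proposition where you replay its Radon argument explicitly). Your remarks on convexity in $x$ and the final row-counting step for $r\geq 1$ match the paper's reasoning as well.
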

	\begin{proof}
		The term $\delta^\top A(x) \delta$ can be equivalently written as ${p}_1(x)^\top {q}_1(\delta)$ for some properly defined ${p}_1(x)$ and ${q}_1(\delta)$. Indeed, if $\delta_i$ is the $i$th component of $\delta$, then ${q}_1(\delta)$ contains the auxiliary uncertainties $\delta_i\delta_j$ for $i,j=1,\ldots,d$. Because $\delta_i\delta_j=\delta_j\delta_i$, it follows from symmetry that ${q}_1(\delta)\in\R^{d(d+1)/2}$. As in the affine case, the remaining term $b(x)^\top \delta + c(x)$ can be decomposed as ${p}_2(x)^\top {q}_2(\delta)$ with $p_2(x)^\top := [b^\top(x) \ c(x)]$ and $q_2(\delta)=[\delta;1]\in\mathbb{R}^{d+1}$. Hence, the constraint can be written as $g(x,\delta)=p(x)^\top q(\delta)$, where $p(x) := [p_1(x);p_2(x)]$ and $q(\delta) := [q_1(\delta); q_2(\delta)]\in\R^{d(d+3)/2+1}$. The claim then follows from Proposition \ref{prop:sc_mults} with $m=d(d+3)/2+1$ and $s(\delta)=0$.
	\end{proof}
	We now let $r\geq1$. Since each constraint $g_i$ has at most $d(d+3)/2+1$ support constraints, with $r$ constraints there are at most $rd(d+3)/2+r$ support constraints.\qed
\else
	\subsection*{Proof of Proposition \ref{prop:convex_quad_r_sc}}
	Similar to the affine case, we proceed in two steps and first prove the following statement.
	\begin{claim*}
	If $r=1$, then $|\textnormal{sc}(\textsc{SP}_\textnormal{quad}[\omega])| \leq d(d+3)/2+1$.
	\end{claim*}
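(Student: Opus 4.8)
The plan is to reduce the quadratic case to the multiplicative case already settled in Proposition \ref{prop:sc_mults}, by \emph{lifting} the monomials of $\delta$ into a higher-dimensional auxiliary uncertainty vector. The key observation is that although $g(x,\delta) = \delta^\top A(x)\delta + b(x)^\top\delta + c(x)$ is nonlinear in $\delta$, it is \emph{linear} in the collection of monomials $\{1,\ \delta_i,\ \delta_i\delta_j\}$, with coefficients depending only on $x$. Since Proposition \ref{prop:sc_mults} imposes no restriction whatsoever on how $q(\delta)$ depends on $\delta$ (it uses only the separable product structure together with convexity of $x\mapsto g(x,\delta)$, which is untouched by relabelling the uncertainty), this reduction is legitimate and essentially finishes the proof.

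Concretely, I would first handle the quadratic term. Using the symmetry $A(x)=A(x)^\top$, write $\delta^\top A(x)\delta = \sum_i A_{ii}(x)\delta_i^2 + 2\sum_{i<j}A_{ij}(x)\delta_i\delta_j$, so that this term depends on $\delta$ only through the $\binom{d+1}{2} = d(d+1)/2$ distinct monomials $\delta_i\delta_j$ with $i\le j$. Collect these into a vector $q_1(\delta)\in\R^{d(d+1)/2}$ and the matching coefficients (absorbing the factor $2$) into $p_1(x)$, giving $\delta^\top A(x)\delta = p_1(x)^\top q_1(\delta)$. Next, exactly as in the affine case, encode the remaining part by setting $q_2(\delta) := [\delta;\, 1]\in\R^{d+1}$ and $p_2(x)^\top := [b(x)^\top\ \ c(x)]$, so that $b(x)^\top\delta + c(x) = p_2(x)^\top q_2(\delta)$.

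Stacking these, set $q(\delta) := [q_1(\delta);\, q_2(\delta)]$ and $p(x) := [p_1(x);\, p_2(x)]$, so that $g(x,\delta) = p(x)^\top q(\delta)$ is purely multiplicative with $s(\delta)=0$. The dimension of the new uncertainty is $m = d(d+1)/2 + (d+1) = d(d+3)/2 + 1$, where the arithmetic $\tfrac{d^2+d}{2} + d + 1 = \tfrac{d^2+3d+2}{2}$ is the only routine computation involved. Applying Proposition \ref{prop:sc_mults} with $r=1$ then yields $|\textnormal{sc}(\textsc{SP}_\textnormal{quad}[\omega])| \le m = d(d+3)/2+1$, which is the claim.

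I do not expect a serious obstacle: essentially all the content lies in setting up the lifting correctly and checking that the monomial count is exactly $d(d+3)/2+1$. The one subtlety worth stating explicitly is that the convexity requirement of the Standing Assumption concerns $x\mapsto g(x,\delta)$, which is preserved verbatim, since the lifting only reinterprets the dependence on $\delta$; hence Proposition \ref{prop:sc_mults} applies with no extra hypothesis. The passage from $r=1$ to general $r$ would then follow the same row-by-row argument as in the affine case, contributing at most $d(d+3)/2+1$ support constraints per row for a total of $rd(d+3)/2+r$.
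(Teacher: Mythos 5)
Your proof is correct and follows essentially the same route as the paper: lift the monomials $\delta_i\delta_j$ (using symmetry of $A(x)$ to count only $d(d+1)/2$ of them) into $q_1(\delta)$, encode $b(x)^\top\delta + c(x)$ via $q_2(\delta)=[\delta;1]$ exactly as in the affine case, and apply Proposition~\ref{prop:sc_mults} with $m=d(d+3)/2+1$. Your explicit remark that the lifting leaves the convexity of $x\mapsto g(x,\delta)$ untouched is a point the paper leaves implicit, but there is no substantive difference between the two arguments.
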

	\begin{proof}
		The term $\delta^\top A(x) \delta$ can be equivalently written as ${p}_1(x)^\top {q}_1(\delta)$ for some properly defined ${p}_1(x)$ and ${q}_1(\delta)$. Indeed, if $\delta_i$ is the $i$th component of $\delta$, then ${q}_1(\delta)$ contains the auxiliary uncertainties $\delta_i\delta_j$ for $i,j=1,\ldots,d$. Because $\delta_i\delta_j=\delta_j\delta_i$, it follows from symmetry that ${q}_1(\delta)\in\R^{d(d+1)/2}$. As in the affine case, the remaining term $b(x)^\top \delta + c(x)$ can be decomposed as ${p}_2(x)^\top {q}_2(\delta)$ with $p_2(x)^\top := [b^\top(x) \ c(x)]$ and $q_2(\delta)=[\delta;1]\in\mathbb{R}^{d+1}$. Hence, the constraint can be written as $g(x,\delta)=p(x)^\top q(\delta)$, where $p(x) := [p_1(x);p_2(x)]$ and $q(\delta) := [q_1(\delta); q_2(\delta)]\in\R^{d(d+3)/2+1}$. The claim then follows from Proposition \ref{prop:sc_mults} with $m=d(d+3)/2+1$.
	\end{proof}
	We now let $r\geq1$. Since each constraint $g_i$ has at most $d(d+3)/2+1$ support constraints, with $r$ constraints there are at most $rd(d+3)/2+r$ support constraints.\qed
\fi

\subsection*{Proof of Proposition \ref{prop:sc_mults_box}}
We proceed similar as in the proof of Proposition \ref{prop:sc_mults}.
\begin{claim*}
	If $r=1$, then $|\textnormal{sc}(\textnormal{SP}[\omega])| \leq m$.
\end{claim*}
\begin{proof}[Proof (Sketch).] The first part is identical to the first paragraph of the proof of the claim in Proposition \ref{prop:sc_mults}  with the difference that $\bar{p}=\sum_{k\in K} \bar{\kappa}_k p_k $ satisfies $\underline{g} \leq \bar{p}^\top q(\delta^{(i)}) + s(\delta^{(i)}) \leq \bar{g}\ \forall i\in\{1,\ldots,m+1\}$. 
Moreover, by affinity of $g_\textnormal{mult}(\cdot,\delta)$  it follows $g_\textnormal{mult}(\bar{x},\delta^{(i)})=\sum_{k\in K}\bar{\kappa}_k g_\textnormal{mult}(x^\star_k,\delta^{(i)}) = \bar{p}^\top q(\delta^{(i)}) + s(\delta^{(i)})$, so that for every $i\in\{1,\ldots,N\}$ we have $\underline{g} \leq  g_\textnormal{mult}(\bar x,\delta^{(i)})  \leq \overline{g}$, i.e.\ $\bar{x}$ is a feasible point of SP$[\omega]$ with cost $c^\top \bar{x} < c^\top x^\star$. This, however, contradicts the fact that $x^\star$ is the optimizer of SP$[\omega]$.
\end{proof}
The proposition follows immediately since $r$ such constraints can generate at most $rm$ support constraints. \qed


\end{appendix}
\balance
\bibliographystyle{unsrt}
\bibliography{library1}

\end{document}